\documentclass[12pt]{amsart}

\usepackage{amsfonts,amssymb,amscd,amsmath,enumerate,verbatim,calc, xcolor}
\usepackage[pagebackref,colorlinks]{hyperref}
\usepackage[colorlinks]{hyperref}
\usepackage{amsrefs}

\usepackage{etoolbox}
\robustify{\fbox}%
\hypersetup{colorlinks=false}

\setlength{\oddsidemargin}{0in}
\setlength{\evensidemargin}{0in}
\setlength{\marginparwidth}{0in}
\setlength{\marginparsep}{0in}
\setlength{\marginparpush}{0in}
\setlength{\topmargin}{0in}
\setlength{\footskip}{.3in}
\setlength{\textheight}{8.75in}
\setlength{\textwidth}{6.5in}
\setlength{\parskip}{4pt}

\newtheorem{thm}{Theorem}[section]
\newtheorem{lem}[thm]{Lemma}
 
\newtheorem{prop}[thm]{Proposition}

\theoremstyle{definition}
\newtheorem{defi}[thm]{{Definition}}
\newtheorem{Example}[thm]{Example}

\newtheorem{chu}[thm]{}

\newtheorem*{chunk*}{}

\numberwithin{equation}{thm}
\theoremstyle{remark}
\newtheorem{rem}[thm]{Remark}

\makeatletter
\renewcommand{\boxed}[1]{\text{\fboxsep=.02em\fbox{\m@th$\displaystyle#1$}}}
\makeatother


\begin{document}

\title[Root closure of ideals]{Root closure of ideals}

\author[J.\, Forsman]{Joey Forsman}
\address{Department of Mathematics 2750\\ North Dakota State University\\PO BOX 6050\\ Fargo, ND 58108-6050\\ USA}
\email{john.forsman@ndsu.edu}

\date{20 October 2022}


\begin{abstract}  We introduce two closure operations on ideals in commutative rings related to the ring operation of root closure. One closure is the result of iterating a root-like operation on ideals infinitely many times, and the other closure arises as a graded component of the root closure of the Rees algebra applied to an ideal. We study the properties of these closures and prove they are distinct operations. 
\end{abstract}

\maketitle

\section{Introduction}

Let $A \subseteq B$ be an extension of commutative rings. Then $A$ is said to be \textit{root closed} in $B$ if whenever $x^n \in A$ for some $x \in B$ and positive integer $n$, then $x \in A$. The smallest subring of $B$ which contains $A$ and is root closed in $B$ is called the \textit{root closure} of $A$ in $B$.

The concept of root closedness was introduced by Sheldon \cite{Sh} in the context where $A$ is an integral domain and $B$ is its quotient field. Later, Anderson \cite{An} studied root closedness in the more general context where $A$ and $B$ are integral domains. In 1990, Anderson, Dobbs, and Roitman \cite{AnDoRo} gave the definition of root closure for a general extension of commutative rings, although they referred to it as total root closure. 

In this paper, for an arbitrary commutative ring $A$ and ideal $I \subseteq A$, we define several ideals $I^{\#}$, $I^{\boxed{\#}}$, and $I^\natural$ of $A$ which are analogous to the ring operation of root closure. Throughout this paper, all rings are commutative and have an identity element.

\begin{defi}[\cite{AnDoRo}]\label{def-ring} Let $A \subseteq B$ be an extension of rings. Define 
\begin{itemize}
\item $A^{\#} = A[x \in B \mid x^n \in A \text{ for some $n \geq 1$} ]$.
\item $A^{\#_1} = A^{\#}$. For an integer $\ell > 1$, define $A^{\#_\ell} = A^{\#_{\ell - 1}}[x \in B \mid x^n \in A^{\#_{\ell - 1}} \text{ for some $n \geq 1$} ]$.
\item $A^\boxed{\#} = \bigcup\limits_{\ell \geq 1} A^{\#_{\ell}}$.
\end{itemize}
\end{defi}

\begin{rem}
The subring $A^{\#}$ of $B$ admits an alternative description: $$A^{\#} = \{ \text{finite sums } \sum\limits_{i} b_i \mid b_i \in B \text{ and } b_i^n \in A \text{ for some }   n \geq 1 \}.$$ 
To see this, observe that every element of $A^{\#}$ is a polynomial in elements of $B$ for which some power of that element is in $A$. Each term of this polynomial also has some power belonging to $A$, as each term is a product of such elements. We take the terms of this polynomial to be the finitely many $b_i$ in the displayed description.
\end{rem}

Anderson, Dobbs, and Roitman \cite[Proposition 1.1]{AnDoRo} prove the ascending union $A^{\boxed{\#}}$ is the root closure of $A$ in $B$. In that paper, they denote this ascending union as $\mathfrak{R}^{\mathbb{N}}_\infty(A,B)$.

\begin{prop}[\cite{AnDoRo}] Let $A \subseteq B$ be an extension of rings. Then $A^{\boxed{\#}}$ is the root closure of $A$ in $B$.
\end{prop}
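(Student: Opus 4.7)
The plan is to verify the three defining conditions of the root closure: that $A^{\boxed{\#}}$ is a subring of $B$, that it contains $A$ and is root closed in $B$, and that it is contained in every other root closed subring of $B$ containing $A$.

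First I would observe that each $A^{\#_\ell}$ is by construction a subring of $B$ containing $A$ (being built by iterated ring adjunctions starting from $A$), and that the chain $A \subseteq A^{\#_1} \subseteq A^{\#_2} \subseteq \cdots$ is ascending. Hence their union $A^{\boxed{\#}}$ is a subring of $B$ containing $A$. This step is essentially bookkeeping.

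Next I would verify root closedness. Suppose $x \in B$ and $x^n \in A^{\boxed{\#}}$ for some $n \geq 1$. Since $A^{\boxed{\#}} = \bigcup_\ell A^{\#_\ell}$, there is some $\ell$ with $x^n \in A^{\#_\ell}$. By the defining recursion,
\[
A^{\#_{\ell+1}} = A^{\#_\ell}\bigl[\, y \in B \mid y^m \in A^{\#_\ell} \text{ for some } m \geq 1 \,\bigr],
\]
so $x \in A^{\#_{\ell+1}} \subseteq A^{\boxed{\#}}$, giving the desired closure property.

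Finally, for minimality, let $C$ be any root closed subring of $B$ with $A \subseteq C$; I would show $A^{\#_\ell} \subseteq C$ for all $\ell$ by induction on $\ell$. For $\ell = 1$, if $x \in B$ satisfies $x^n \in A \subseteq C$, then $x \in C$ by root closedness of $C$; since $C$ is a subring containing $A$ and all such $x$, and $A^{\#_1}$ is the subring generated by these, we get $A^{\#_1} \subseteq C$. The inductive step is identical in form: if $A^{\#_{\ell-1}} \subseteq C$ and $x^n \in A^{\#_{\ell-1}} \subseteq C$, then $x \in C$, so $A^{\#_\ell} \subseteq C$. Taking the union yields $A^{\boxed{\#}} \subseteq C$, completing the proof.

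I do not expect a significant obstacle; the only mildly delicate point is keeping the induction clean and remembering that $A^{\#_\ell}$ is defined as the subring generated by the eligible elements, which is automatically inside any subring containing those elements. Everything else is a direct translation of the definitions.
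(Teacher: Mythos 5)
Your proof is correct and is the standard three-part verification; the paper itself does not include a proof of this proposition but simply cites Anderson--Dobbs--Roitman \cite[Proposition 1.1]{AnDoRo}, and your argument is exactly the canonical one that appears there: the ascending union of subrings is a subring, root closedness falls out of the recursion $A^{\#_{\ell+1}} = A^{\#_\ell}[\,y \in B \mid y^m \in A^{\#_\ell}\,]$, and minimality is a clean induction on $\ell$ using that any root closed $C \supseteq A$ must contain each generator of $A^{\#_\ell}$.
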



The definitions of the following ideals are inspired from Definition \ref{def-ring}.

\begin{defi}\label{def-ideal} Let $A$ be a ring and let $I \subseteq R$ be an ideal. Define 
\begin{itemize}
\item $I^{\#} = \langle x \in A \mid x^n \in I^n \text{ for some $n \geq 1$} \rangle$.
\item $I^{\#_1} = I^{\#}$. For an integer $\ell > 1$, define $I^{\#_\ell} = \langle x \in A \mid x^n \in (I^{\#_{\ell - 1}})^n \text{ for some $n \geq 1$} \rangle$.
\item $I^\boxed{\#} = \bigcup\limits_{\ell \geq 1} I^{\#_{\ell}}$. We refer to $I^{\boxed{\#}}$ as the \textit{boxed-sharp closure} of $I$.
\end{itemize}
\end{defi}

\begin{rem}
The ideal $I^{\#}$ is \textit{generated} by the set of those elements such that some power of the element belongs to the same power of $I$. This set does not, in general, form an ideal. Consider the ring $A = \mathbb{C}[X, Y]$, the ideal $I = \langle X^2, Y^2 \rangle$,  and the set $$\mathbb{S} = \{ x \in A \mid x^n \in I^n \text{ for some } n \geq 1 \}.$$ Then $X^2, 2XY, Y^2 \in \mathbb{S}$, yet $(X + Y)^2 \not\in \mathbb{S}$. To see this, suppose otherwise that there exists $n \geq 1$ such that $(X + Y)^{2n} \in I^n$. By the binomial theorem, the coefficient of $XY^{2n-1}$ in the expansion of $(X + Y)^{2n}$ is $2n$. However, by taking the degree $2n$ component across the inclusion $(X + Y)^{2n} \in I^n = \langle X^2, Y^2 \rangle^n$, we find by comparing coefficients on the term $XY^{2n - 1}$ that $2n = 0$, a contradiction.
\end{rem}

We adapt the notion of root closure to the case of ideals with the following definition.

\begin{defi}\label{def-ideal-root} Let $A$ be a ring and let $I \subseteq A$ be an ideal. We say $I$ is \textit{root closed} in $A$ if whenever $x^n \in I^n$ for some $x \in A$ and positive integer $n$, then $x \in I$. The smallest ideal of $A$ which contains $I$ and is root closed in $A$ is called the \textit{root closure} of $I$ in $A$.
\end{defi}

Just as $A^{\boxed{\#}}$ is the root closure of $A$ in $B$, we prove in Section \ref{sec-closure} for a given ideal $I \subseteq A$ that $I^{\boxed{\#}}$ is the root closure of $I$ in $A$. Here we have a notion ``root closure'' characterized for rings and for ideals. The operation ``integral closure'' also has a meaning for rings and for ideals, and this can be exhibited through the Rees algebra of the ideal $I$. Namely, for an ideal $I \subseteq A$ and indeterminate $t$ over $A$, we study the integral closure of $R[It]$ in $R[t]$. This ring, denoted $\overline{R[It]}$, is $\mathbb{N}$-graded and has graded structure $$\overline{R[It]} = \bigoplus\limits_{n \in \mathbb{N}} \overline{I^n}t^n,$$ where $\overline{I^n}$ denotes the integral closure of the ideal $I^n$. Mimicking this perspective, in Section \ref{sec-rees}, we determine the graded structure of the root closure of the Rees algebra $R[It]$ in $R[t]$. The root closure $R[It]^{\boxed{\#}}$ of $R[It]$ in $R[t]$ is $\mathbb{N}$-graded and has graded structure $$R[It]^{\boxed{\#}} = \bigoplus\limits_{n \in \mathbb{N}} (I^n)^{\natural}t^n,$$ where, for a given ideal $I \subseteq A$, the ideal $I^\natural$ is defined as follows.

\begin{defi} Let $A$ be a ring and let $I \subseteq A$ be an ideal. The \textit{natural closure} $I^\natural$ of $I$ in $A$ is defined by $$I^\natural = \{ x \in A \mid x^n \in (I^n)^{\boxed{\#}} \text{ for some } n \geq  1 \}.$$
\end{defi}

In Section \ref{sec-closure}, we prove this is a closure operation on ideals. We also prove various properties the operations $\#$, $\boxed{\#}$, and $\natural$ enjoy. In Section \ref{sec-ex}, we provide examples which show that $\#$, ${\boxed{\#}}$, $\natural$, and integral closure are distinct operations on ideals.

\section{Properties of the operations $\#$, \protect{\boxed{\#}}, and $\natural$ }\label{sec-closure}

We recall the notion of a closure operation on ideals. 
\begin{defi}\label{defi-closure-op}
Let $A$ be a ring. A \textit{closure operation} $\square$ on the set of ideals of $A$ is a mapping $I \xrightarrow{} I^{\square}$ which satisfies the following conditions:
\begin{enumerate}
\item For all ideals $I$ of $A$, $I \subseteq I^{\square}.$
\item For all ideals $I, J$ of $A$ with $I \subseteq J$, $I^\square \subseteq J^{\square}.$
\item For all ideals $I$ of $A$, $I^{\square} = (I^{\square})^{\square}$.
\end{enumerate}
\end{defi}

\begin{prop}\label{prop-sharp-closure} Let $A$ be a ring. Then the mapping on ideals of $A$ given by $I \xrightarrow{} I^{\#}$ satisfies conditions (1) and (2) of Definition \ref{defi-closure-op}, but does not generally satisfy condition (3).
\end{prop}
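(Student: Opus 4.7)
The plan is to handle the three conditions separately: proving (1) and (2) directly from the definition of $I^{\#}$, and exhibiting an example to demonstrate the failure of (3).

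For (1), I would take an arbitrary $x \in I$ and apply the definition with $n = 1$: since $x^{1} = x \in I = I^{1}$, the element $x$ lies in the generating set $\{ y \in A : y^n \in I^n \text{ for some } n \geq 1 \}$, and hence $x \in I^{\#}$, giving $I \subseteq I^{\#}$.

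For (2), I would observe that if $I \subseteq J$ then $I^n \subseteq J^n$ for every $n \geq 1$, so any $x$ satisfying $x^n \in I^n$ automatically satisfies $x^n \in J^n$. This is a containment of generating sets, which immediately yields $I^{\#} \subseteq J^{\#}$.

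The hard part is (3), the failure of idempotence. Here I would exhibit a ring $A$ and an ideal $I \subseteq A$ --- along the lines of the examples developed in Section~\ref{sec-ex} --- for which $(I^{\#})^{\#} \supsetneq I^{\#}$. The main obstacle is to arrange that $J := I^{\#}$ is itself \emph{not} root closed: one must find new generators $z \in I^{\#} \setminus I$ (each satisfying $z^{n_{z}} \in I^{n_{z}}$) whose mutual interactions inside $A$ produce a fresh element $x \notin I^{\#}$ with $x^n \in J^n$ for some $n$, yet $x^m \notin I^m$ for every $m \geq 1$. The remark following Definition~\ref{def-ideal}, which shows that the set $\{y : y^n \in I^n \text{ for some } n\}$ need not be closed under addition, already signals that passing from $I$ to $I^{\#}$ introduces genuinely new elements through ideal closure, and a carefully chosen quotient ring can make the iteration of this phenomenon enlarge the closure strictly at the second step.
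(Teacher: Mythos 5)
Your verifications of (1) and (2) are correct and match the paper's (implicit) argument: $n=1$ gives (1), and $I\subseteq J$ implies $I^n\subseteq J^n$ for all $n$, hence an inclusion of generating sets, which gives (2). The paper merely asserts these are clear; you spell them out, which is fine.

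For (3), however, you have not given a proof --- you have given a plan for a proof. You correctly diagnose what an example must do (produce a ``new'' element in $(I^{\#})^{\#}\setminus I^{\#}$ from the interaction of generators of $I^{\#}$), and you correctly observe that the remark after Definition~\ref{def-ideal} signals this kind of phenomenon is possible. But the entire mathematical content of this half of the proposition is the explicit construction, and you never exhibit a ring $A$ and ideal $I$ nor verify the strict inclusion. The paper's proof defers to Example~\ref{exb}, which takes $R=\mathbb{Z}_2[A,B,T]/\langle A^3-B^3,\; T^3-A^2B-AB^2\rangle$ and $I=\langle a\rangle$, and proves $t\in(I^{\#})^{\#}\setminus I^{\#}$: the easy direction is $b^3=a^3\in I^3$ so $b\in I^{\#}$, then $t^3=a^2b+ab^2\in(I^{\#})^3$ so $t\in(I^{\#})^{\#}$; the hard direction, $t\notin I^{\#}$, requires Lemma~\ref{lem-ex} to reduce to degree-$1$ homogeneous candidates and then a case analysis (using the nilradical computation and specializations involving cube roots of unity) to rule out $(t+\ell_1 a+\ell_2 b)^n\in\langle a^n\rangle$. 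None of that work is present in your proposal, and it is not routine: a naively chosen quotient will usually make $I^{\#}$ already root closed. So (1) and (2) are fine, but (3) is a genuine gap --- you need to supply and verify an actual counterexample.
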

\noindent It is clear from the definition that the mapping $I \xrightarrow{} I^{\#}$ satisfies conditions (1) and (2). In Example \ref{exb}, we exhibit an ideal $I$ such that $I^{\#} \neq (I^{\#})^{\#}$.

In Propositions \ref{prop-box-closure} and \ref{prop-nat-closure} respectively, we prove that $\boxed{\#}$ and $\natural$ are closure operations on ideals. For a given ideal $I$ of a ring $A$, we need to prove  the sets $ I^{\boxed{\#}}$ and $I^\natural$ are ideals. For all $\ell >1$, Proposition \ref{prop-sharp-closure} proves $I^{\# _\ell} \subseteq (I^{\#_\ell})^{\#} =  I^{\#_{\ell + 1}}$. Thus $I^{\boxed{\#}}$ is a union over an increasing chain of ideals and is thus an ideal itself.  Showing $I^\natural$ is an ideal is more involved; we first prove two lemmas.

\begin{lem}\label{lem-contains}
Let $A$ be a ring and let $I, J$ be ideals of $A$. Then the following containments hold: $$I^{\#}J^{{\#}} \subseteq (IJ)^{{\#}} \text{ and } I^{\boxed{\#}}J^{\boxed{\#}} \subseteq (IJ)^{\boxed{\#}}.$$
\end{lem}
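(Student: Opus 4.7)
The plan is to first reduce both containments to a single generator-level computation, and then package the rest by induction and by taking unions.

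For the containment $I^{\#}J^{\#}\subseteq (IJ)^{\#}$, the idea is to work on generators. An element of $I^{\#}J^{\#}$ is a finite sum of products $xy$ with $x\in I^{\#}$ and $y\in J^{\#}$, and by expanding $x$ and $y$ as $A$-linear combinations of generators it suffices, using that $(IJ)^{\#}$ is an ideal, to prove $xy\in (IJ)^{\#}$ when $x^m\in I^m$ and $y^n\in J^n$ for some $m,n\geq 1$. The key computation is
\[
(xy)^{mn}=(x^m)^n(y^n)^m\in (I^m)^n(J^n)^m=I^{mn}J^{mn}=(IJ)^{mn},
\]
which places $xy$ in the generating set of $(IJ)^{\#}$. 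This handles the first containment.

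For the second containment, I would prove the stronger statement that $I^{\#_\ell}J^{\#_\ell}\subseteq (IJ)^{\#_\ell}$ for every $\ell\geq 1$ by induction on $\ell$. The base case $\ell=1$ is the first containment. For the inductive step, assume $I^{\#_\ell}J^{\#_\ell}\subseteq (IJ)^{\#_\ell}$. Applying the base case to the ideals $I^{\#_\ell}$ and $J^{\#_\ell}$, followed by monotonicity of $\#$ (condition (2) from Proposition \ref{prop-sharp-closure}) and the inductive hypothesis, yields
\[
I^{\#_{\ell+1}}J^{\#_{\ell+1}}=(I^{\#_\ell})^{\#}(J^{\#_\ell})^{\#}\subseteq \bigl(I^{\#_\ell}J^{\#_\ell}\bigr)^{\#}\subseteq \bigl((IJ)^{\#_\ell}\bigr)^{\#}=(IJ)^{\#_{\ell+1}}.
\]

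Finally, to deduce $I^{\boxed{\#}}J^{\boxed{\#}}\subseteq (IJ)^{\boxed{\#}}$, I take an arbitrary element of the product on the left; it is a finite sum of pure products $xy$ with $x\in I^{\#_\ell}$ and $y\in J^{\#_m}$ for some $\ell,m$. Since $\{I^{\#_k}\}$ and $\{J^{\#_k}\}$ are ascending chains, setting $k=\max(\ell,m)$ gives $x\in I^{\#_k}$ and $y\in J^{\#_k}$, hence $xy\in I^{\#_k}J^{\#_k}\subseteq (IJ)^{\#_k}\subseteq (IJ)^{\boxed{\#}}$. I do not anticipate a genuine obstacle here; the only mild subtlety is keeping the bookkeeping clean when passing from generators of $\#$ to arbitrary elements of products of ideals, which is why I separate out the generator-level identity $(xy)^{mn}\in (IJ)^{mn}$ as the one real computation.
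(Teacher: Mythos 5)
Your proposal is correct and follows essentially the same route as the paper: the generator-level computation $(xy)^{mn}=(x^m)^n(y^n)^m\in (IJ)^{mn}$, the passage to $I^{\#_\ell}J^{\#_\ell}\subseteq (IJ)^{\#_\ell}$ (your explicit induction with monotonicity is just a cleaner phrasing of the paper's iterative application of the first part), and the max-of-levels argument for the boxed-sharp containment.
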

\begin{proof}
For the first part, let $xy \in I^{\#}J^{{\#}}$, where $x \in I^{\#}$ and $y \in J^{{\#}}$. Assume further that $x^{n_1} \in I^{n_1}$ for some ${n_1} \geq 1$ and $y^{n_2} \in J^{n_2}$ for some $n_2 \geq 1$. Since elements of the form $xy$ generate $I^{\#}J^{\#}$, it is enough to show $xy \in (IJ)^{{\#}}$. Note that $(xy)^{{n_1}{n_2}} = (x^{n_1})^{n_2}(y^{n_2})^{n_1} \in (I^{n_1})^{n_2}(J^{n_2})^{n_1} = I^{{n_1}{n_2}}J^{{n_1}{n_2}} =  (IJ)^{{n_1}{n_2}}$. Hence $xy \in (IJ)^{\#}$ as desired.

The first part shows that for $\ell > 1$ we have $I^{\#_\ell}J^{\#_\ell} = (I^{\#_{\ell - 1}})^{\#}(J^{\#_{\ell - 1}})^{\#} \subseteq (I^{\#_{\ell-1}}J^{\#_{\ell -1}})^{\#}$. Applying the first part, we find $(I^{\#_{\ell-1}}J^{\#_{\ell -1}})^{\#} \subseteq (I^{\#_{\ell-2}}J^{\#_{\ell -2}})^{\#_2}$. Continuing to apply the first part iteratively, we find $I^{\#_\ell}J^{\#_\ell} \subseteq (IJ)^{\#_\ell}$.

For the second part, let $xy \in I^{\boxed{\#}}J^{\boxed{\#}}$,  where $x \in I^{\boxed{\#}}$ and $y \in J^{\boxed{\#}}$. It is enough to show $xy \in (IJ)^{\boxed{\#}}$. We may write $x \in I^{\#_{\ell_1}}$ for some $\ell_1 \geq 1$ and $y \in J^{\#_{\ell_2}}$ for some $\ell_2 \geq 1$. Write $\ell = \max \{ \ell_1, \ell_2 \}$. Now $I^{\#_{\ell_1}} \subseteq I^{\#_{\ell}}$ and  $J^{\#_{\ell_2}} \subseteq J^{\#_{\ell}}$, so $xy \in I^{\#_{\ell}} J^{\#_{\ell}} \subseteq  (IJ)^{\#_{\ell}} \subseteq (IJ)^{\boxed{\#}}$.
\end{proof}

\begin{lem}\label{lem-box-sharp-sharp}
Let $A$ be a ring and let $I \subseteq A$ be an ideal. Then $(I^{\boxed{\#}})^{\#} = I^{\boxed{\#}}$.
\end{lem}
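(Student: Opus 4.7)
The statement is an equality of ideals, so I would prove the two inclusions separately. The inclusion $I^{\boxed{\#}} \subseteq (I^{\boxed{\#}})^{\#}$ is immediate: by Proposition \ref{prop-sharp-closure}(1), applying $\#$ to any ideal enlarges it, and taking the ideal to be $I^{\boxed{\#}}$ itself gives exactly this containment.

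For the reverse inclusion $(I^{\boxed{\#}})^{\#} \subseteq I^{\boxed{\#}}$, I would exploit the fact that $I^{\boxed{\#}}$ is already known to be an ideal (as noted in the paragraph before Lemma \ref{lem-contains}). Since $(I^{\boxed{\#}})^{\#}$ is by definition generated by those $x \in A$ for which $x^n \in (I^{\boxed{\#}})^n$ for some $n \geq 1$, it suffices to verify each such generator already lies in $I^{\boxed{\#}}$. Fix such an $x$ and $n$. I would expand $x^n$ as a finite sum of products $y_1 \cdots y_n$ with each $y_i \in I^{\boxed{\#}}$ (absorbing $A$-coefficients into one of the factors). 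Only finitely many factors $y_i$ appear across this expansion, and by the defining description $I^{\boxed{\#}} = \bigcup_{\ell \geq 1} I^{\#_\ell}$, each such $y_i$ lies in some $I^{\#_{\ell_i}}$. Letting $\ell$ be the maximum of these finitely many indices and using the ascending chain $I^{\#_1} \subseteq I^{\#_2} \subseteq \cdots$, every $y_i$ then belongs to $I^{\#_\ell}$, so $x^n \in (I^{\#_\ell})^n$. By the definition of $\#$, this forces $x \in (I^{\#_\ell})^{\#} = I^{\#_{\ell+1}} \subseteq I^{\boxed{\#}}$, which is the desired conclusion.

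The only delicate point in this plan is the collapse to a single index $\ell$, and it works precisely because $(I^{\boxed{\#}})^n$ consists of \emph{finite} sums of products of elements of $I^{\boxed{\#}}$; the ascending nature of the chain $\{I^{\#_\ell}\}_\ell$ then lets one common index dominate all the $\ell_i$. I do not anticipate any serious obstacle beyond this finiteness observation.
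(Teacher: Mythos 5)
Your proposal is correct and follows essentially the same route as the paper's proof: both reduce to the generators $x$ with $x^n \in (I^{\boxed{\#}})^n$, expand $x^n$ as a finite sum of $n$-fold products of elements of $I^{\boxed{\#}}$, use finiteness to find a single $\ell$ with all factors in $I^{\#_\ell}$, and conclude $x \in I^{\#_{\ell+1}} \subseteq I^{\boxed{\#}}$. The paper writes the reduction to generators slightly more explicitly (as a sum $x = x_1 + \cdots + x_r$ and a common exponent), but the key finiteness observation is identical.
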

\begin{proof}
The reverse containment is clear by Proposition \ref{prop-sharp-closure}.  For the forward containment, let $x \in (I^{\boxed{\#}})^\#$. Write $x = x_1 + \dots + x_r$,  where $x_i^{n_i} \in (I^{\boxed{\#}})^{n_i}$ for some $n_i \geq 1$ for all $i$. Define $n \in \mathbb{N}$ to be the product of the elements $n_1, \dots, n_r$. Notice $x_i^n \in (I^{\boxed{\#}})^n$ for all $i$.  We may find $\ell \geq 1$ such that $x_i^n \in (I^{\#_\ell})^n$ for all $i$. Indeed, since $x_i^n \in (I^{\boxed{\#}})^n$, we may write $x_i^n$ as a finite sum of terms which are $n$-fold products of elements in $I^{\boxed{\#}}$. Each such element belongs to $I^{\#_\ell}$ for some large enough $\ell$, and thus each such $n$-fold product belongs to $(I^{\#_\ell})^n$.  Now, since $x_i^n \in (I^{\#_\ell})^n$ for all $i$, we find $x_i \in I^{\#_{\ell+1}} \subseteq I^{\boxed{\#}}$ for all $i$. It follows $x \in I^{\boxed{\#}}$.
\end{proof}

\begin{prop}
Let $A$ be a ring and let $I \subseteq A$ be an ideal. Then the set $I^\natural$ is an ideal of $A$.
\end{prop}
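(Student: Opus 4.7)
The plan is to verify the three axioms of an ideal for $I^{\natural}$: it contains $0$, is closed under multiplication by elements of $A$, and is closed under addition. The first two are essentially immediate. Since $0 \in (I^n)^{\boxed{\#}}$ for any $n$, we have $0 \in I^{\natural}$. If $x \in I^{\natural}$ with $x^n \in (I^n)^{\boxed{\#}}$ and $a \in A$, then $(ax)^n = a^n x^n \in (I^n)^{\boxed{\#}}$ because $(I^n)^{\boxed{\#}}$ is itself an ideal. The genuine content of the proposition lies in closure under addition.

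Suppose $x, y \in I^{\natural}$, so $x^n \in (I^n)^{\boxed{\#}}$ and $y^m \in (I^m)^{\boxed{\#}}$ for some $n, m \geq 1$. The first step is to arrange matching exponents. Setting $N = nm$, one has $x^N = (x^n)^m \in \bigl((I^n)^{\boxed{\#}}\bigr)^m$, and iterating Lemma \ref{lem-contains} gives $\bigl((I^n)^{\boxed{\#}}\bigr)^m \subseteq \bigl((I^n)^m\bigr)^{\boxed{\#}} = (I^N)^{\boxed{\#}}$; symmetrically $y^N \in (I^N)^{\boxed{\#}}$. The second step is to expand $(x+y)^N$ by the binomial theorem and show every term lies in $(I^N)^{\boxed{\#}}$. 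The key computation is
\[
(x^i y^{N-i})^N \;=\; (x^N)^i (y^N)^{N-i} \;\in\; \bigl((I^N)^{\boxed{\#}}\bigr)^N,
\]
which, by the definition of $\#$ applied to the ideal $J := (I^N)^{\boxed{\#}}$, says $x^i y^{N-i} \in J^{\#}$. Lemma \ref{lem-box-sharp-sharp} then yields $J^{\#} = \bigl((I^N)^{\boxed{\#}}\bigr)^{\#} = (I^N)^{\boxed{\#}}$, so every term of the binomial expansion lies in $(I^N)^{\boxed{\#}}$. Summing gives $(x+y)^N \in (I^N)^{\boxed{\#}}$, whence $x+y \in I^{\natural}$.

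The main obstacle is choosing the right power. A naive $(x+y)^{n+m}$ or $(x+y)^{2N}$ leaves mixed terms $x^i y^j$ whose individual factors cannot be placed in a sufficiently high power of $I$, and handling these terms in their natural degree seems to fail. The insight is to raise each binomial term $x^i y^{N-i}$ to the full exponent $N$ so that \emph{both} $x^N$ and $y^N$ can be extracted with matching multiplicities, landing $(x^i y^{N-i})^N$ squarely in $J^N$; the stability property $J^{\#} = J$ from Lemma \ref{lem-box-sharp-sharp} then closes the loop. The initial reduction to the common exponent $N$ is essential, since without it one only obtains membership in $((I^k)^{\ell})^{\boxed{\#}}$ rather than in $((I^k)^{\boxed{\#}})^{\ell}$, and the direction of the inclusion in Lemma \ref{lem-contains} is the wrong one to apply $\#$ afterward.
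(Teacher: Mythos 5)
Your proof is correct and follows essentially the same route as the paper: reduce to a common exponent $N = nm$ using Lemma \ref{lem-contains}, then for each mixed term $x^i y^{N-i}$ compute $(x^i y^{N-i})^N = (x^N)^i (y^N)^{N-i} \in J^N$ with $J = (I^N)^{\boxed{\#}}$, and conclude via Lemma \ref{lem-box-sharp-sharp} that $J^{\#} = J$. The only cosmetic difference is that you expand $(x+y)^N$ by the binomial theorem term by term, whereas the paper phrases the same argument in terms of the generators $x^{n-i}y^i$ of the ideal $\langle x, y\rangle^n$.
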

\begin{proof}
Let $x, y \in I^\natural$ and $r \in A$. We may write $x^{n_1} \in (I^{n_1})^{\boxed{\#}}$ and $y^{n_2} \in (I^{n_2})^{\boxed{\#}}$ for some $n_1, n_2 \geq 1$. Observe that $(rx)^{n_1} = r^{n_1}x^{n_1} \in (I^{n_1})^{\boxed{\#}}$, so $rx \in I^\natural$. Next, we show $x + y \in I^\natural$. To this end, let $n \in \mathbb{N}$ denote the product $n_1n_2$. Note $x^n = (x^{n_1})^{n_2} \in ((I^{n_1})^{\boxed{\#}})^{n_2} \subseteq (I^n)^{\boxed{\#}}$, where the last containment follows from Lemma \ref{lem-contains}. Similarly, $y^n \in (I^n)^{\boxed{\#}}$. Let $J$ denote the ideal $\langle x , y \rangle^n$, and consider the elements in $J$ of the form $x^{n-i}y^i$ where $0 \leq i \leq n$. The set of such elements generates $J$. For all $i$, $(x^{n-i}y^i)^n = (x^n)^{n-i}(y^n)^{i} \in  ((I^n)^{\boxed{\#}})^{n}$, which means $x^{n-i}y^i \in ((I^n)^{\boxed{\#}})^{\#} = (I^n)^{\boxed{\#}}$ for all $i$, where the last equality follows from Lemma \ref{lem-box-sharp-sharp}. Hence $J \subseteq (I^n)^{\boxed{\#}}$. Notice that $(x + y)^n \in J \subseteq (I^n)^{\boxed{\#}}$, so $x + y \in I^\natural$.
\end{proof}

Now we aim to prove ${\boxed{\#}}$ and $\natural$ are closure operations on ideals. Whether these are \textit{distinct} operations is answered positively in Section \ref{sec-ex}.

\begin{prop}\label{prop-box-closure} Let $A$ be a ring. Then the mapping on ideals of $A$ given by $I \xrightarrow{} I^{\boxed{\#}}$ is a closure operation.
\end{prop}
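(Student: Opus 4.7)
The plan is to verify the three conditions of Definition \ref{defi-closure-op} in turn, with condition (3) being the only nontrivial one since Lemma \ref{lem-box-sharp-sharp} does most of the heavy lifting.

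First, condition (1) is immediate from the construction: by definition $I \subseteq I^{\#} = I^{\#_1}$, and the union defining $I^{\boxed{\#}}$ contains $I^{\#_1}$, so $I \subseteq I^{\boxed{\#}}$.

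For condition (2), suppose $I \subseteq J$. I would show by induction on $\ell \geq 1$ that $I^{\#_\ell} \subseteq J^{\#_\ell}$. The base case $\ell = 1$ follows from the fact that any $x$ with $x^n \in I^n$ also satisfies $x^n \in J^n$, so every generator of $I^{\#}$ lies in $J^{\#}$. For the inductive step, if $x^n \in (I^{\#_{\ell-1}})^n$, then by the inductive hypothesis $x^n \in (J^{\#_{\ell-1}})^n$, so $x \in J^{\#_\ell}$; thus the generators of $I^{\#_\ell}$ belong to $J^{\#_\ell}$, and $I^{\#_\ell} \subseteq J^{\#_\ell}$. Taking the union over $\ell$ gives $I^{\boxed{\#}} \subseteq J^{\boxed{\#}}$.

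For condition (3), the idempotence, one containment $I^{\boxed{\#}} \subseteq (I^{\boxed{\#}})^{\boxed{\#}}$ is just condition (1) applied to the ideal $I^{\boxed{\#}}$. The reverse containment is where the work lies, and my approach is to show by induction on $\ell$ that $(I^{\boxed{\#}})^{\#_\ell} = I^{\boxed{\#}}$ for every $\ell \geq 1$. The base case $\ell = 1$ is exactly Lemma \ref{lem-box-sharp-sharp}. For the inductive step, assuming $(I^{\boxed{\#}})^{\#_{\ell-1}} = I^{\boxed{\#}}$, the definition gives
\[
(I^{\boxed{\#}})^{\#_\ell} = \bigl((I^{\boxed{\#}})^{\#_{\ell-1}}\bigr)^{\#} = (I^{\boxed{\#}})^{\#} = I^{\boxed{\#}},
\]
again by Lemma \ref{lem-box-sharp-sharp}. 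Taking the union over $\ell \geq 1$ yields $(I^{\boxed{\#}})^{\boxed{\#}} = I^{\boxed{\#}}$.

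The main obstacle is the idempotence step, but since Lemma \ref{lem-box-sharp-sharp} is already available, the argument reduces to a short induction that collapses each layer of iterated sharps back into $I^{\boxed{\#}}$. No new combinatorial input is needed beyond what the earlier lemmas provide.
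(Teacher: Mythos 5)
Your proof is correct and follows essentially the same route as the paper: condition (1) via the chain $I \subseteq I^{\#} \subseteq I^{\boxed{\#}}$, condition (2) by iterating the monotonicity of $\#$ (the paper phrases this as ``applying $\#$ a total of $\ell$ times'' rather than an explicit induction, but it is the same argument), and condition (3) by using Lemma \ref{lem-box-sharp-sharp} to collapse $(I^{\boxed{\#}})^{\#_\ell}$ back to $I^{\boxed{\#}}$ for every $\ell$ before taking the union.
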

\begin{proof}

Let $I \subseteq A$ be an ideal. We have $I \subseteq I^{\#} \subseteq I^{\boxed{\#}}$. Next, let $J$ be an ideal such that $I \subseteq J$. To show $I^{\boxed{\#}} \subseteq J^{\boxed{\#}}$, let $x \in I^{\boxed{\#}}$. We may find $\ell \geq 1$ such that $x \in I^{\#_\ell}$. By applying the operation $\#$ across the containment $I \subseteq J$ a total of $\ell$ times, we obtain $I^{\#_\ell} \subseteq J^{\#_\ell}$. Thus $x \in J^{\#_\ell} \subseteq J^{\boxed{\#}}$, which shows $I^{\boxed{\#}} \subseteq J^{\boxed{\#}}$, as desired. Finally, we show $(I^{\boxed{\#}})^{\boxed{\#}} = I^{\boxed{\#}}$. By Lemma \ref{lem-box-sharp-sharp}, we see $$I^{\boxed{\#}} = (I^\boxed{\#})^{\#} = ((I^\boxed{\#})^{\#})^{\#} = \dots = (I^\boxed{\#})^{\#_\ell} = \dots, $$ where $\ell \geq 1$. Hence $(I^{\boxed{\#}})^{\boxed{\#}} =  \bigcup\limits_{\ell \geq 1} (I^\boxed{\#})^{\#_{\ell}} = I^{\boxed{\#}}.$ 

\end{proof}

\begin{prop}
Let $A$ be a ring, and let $I \subseteq A$. Then the ideal $I^{\boxed{\#}}$ is the root closure of $I$. 
\end{prop}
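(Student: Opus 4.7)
The plan is to verify the two defining properties of the root closure directly: that $I^{\boxed{\#}}$ contains $I$ and is itself root closed, and that it sits inside every root-closed ideal containing $I$.

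First I would observe that $I \subseteq I^{\#} \subseteq I^{\boxed{\#}}$ is immediate from the definitions. Next, to show $I^{\boxed{\#}}$ is root closed, suppose $x \in A$ with $x^n \in (I^{\boxed{\#}})^n$ for some $n \geq 1$. Then by the definition of $\#$, the element $x$ belongs to $(I^{\boxed{\#}})^{\#}$. But Lemma \ref{lem-box-sharp-sharp} gives $(I^{\boxed{\#}})^{\#} = I^{\boxed{\#}}$, so $x \in I^{\boxed{\#}}$, as required. This step is where Lemma \ref{lem-box-sharp-sharp} does the real work: without the stability of $I^{\boxed{\#}}$ under a single application of $\#$, root-closedness could fail.

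For minimality, the key observation I would isolate as the engine of the proof is that \emph{a root closed ideal $J$ satisfies $J^{\#} = J$}. Indeed, $J^{\#}$ is generated by elements $x$ with $x^n \in J^n$ for some $n \geq 1$, and each such $x$ lies in $J$ by root-closedness, so $J^{\#} \subseteq J$; combined with the always-valid reverse containment this gives equality. Iterating yields $J^{\#_\ell} = J$ for every $\ell \geq 1$, and taking the union across $\ell$ gives $J^{\boxed{\#}} = J$. Now if $J$ is any root closed ideal with $I \subseteq J$, the monotonicity property of $\boxed{\#}$ (Proposition \ref{prop-box-closure}) yields $I^{\boxed{\#}} \subseteq J^{\boxed{\#}} = J$, so $I^{\boxed{\#}}$ is contained in every root closed ideal containing $I$.

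I do not expect any serious obstacle, since the heavy lifting has already been done in Lemma \ref{lem-box-sharp-sharp} and Proposition \ref{prop-box-closure}; the mild subtlety is just recognizing that ``root closed'' for ideals translates, via the generating description of $\#$, into the fixed point equation $J^{\#} = J$, which then propagates to $J^{\boxed{\#}} = J$ by the ascending-union construction.
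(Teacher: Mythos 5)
Your proof is correct and follows essentially the same approach as the paper: both rely on Lemma \ref{lem-box-sharp-sharp} to show $I^{\boxed{\#}}$ is root closed, and both isolate the fixed-point observation $J^{\#}=J$ for a root closed ideal $J$. Your minimality step is marginally streamlined --- you invoke the already-proven monotonicity of $\boxed{\#}$ from Proposition \ref{prop-box-closure} to pass from $J^{\boxed{\#}}=J$ to $I^{\boxed{\#}}\subseteq J$, whereas the paper re-derives this inline by iterating $\#$ across the containment $I\subseteq J$, but the underlying idea is identical.
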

\begin{proof}
First we prove that $I^{\boxed{\#}}$ is root closed. Let $x \in A$ be such that $x^n \in (I^{\boxed{\#}})^n$ for some $n \geq 1$. Then $x \in (I^{\boxed{\#}})^{\#} = I^{\boxed{\#}}$. 

Next we prove that $I^{\boxed{\#}}$ is the smallest root closed ideal of $R$ containing $I$. To this end, let $J \subseteq R$ be a root closed ideal containing $I$. Since $J$ is root closed, we claim that $J^{\#_\ell} = J$ for all $\ell \geq 1$. To this end, we start by proving $J^{\#} = J$. Let $x \in A$ be such that $x^n \in J^n$ for some $n \geq 1$. Since $J^\#$ is generated by elements of this type, it suffices to show $x \in J$. This is clear, since $J$ is root closed. Since $J = J^{\#}$, for $\ell \geq 1$, $$J = J^{\#} = (J^{\#})^{\#} = \dots = J^{\#_\ell} = \dots.$$ Next, we prove $I^{\boxed{\#}} \subseteq J$.  Let $x \in I^{\boxed{\#}}$. We may write $x \in I^{\#_\ell}$ for some $\ell \geq 1$. Apply $\#$ to the containment $I \subseteq J$ to obtain $I^{\#} \subseteq J^{\#}= J$. Applying a total of $\ell$ times, we find $I^{\#_\ell} = J$. We see $x \in J$, which shows $I^{\boxed{\#}} \subseteq J$, as desired.
\end{proof}

We need two lemmas before we can prove $\natural$ is a closure operation.

\begin{lem}\label{lem-contains-nat}
Let $A$ be a ring and let $I, J$ be ideals of $A$. Then the following containment holds: $$I^\natural J^\natural \subseteq (IJ)^\natural.$$
\end{lem}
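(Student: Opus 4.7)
The plan is to reduce the inclusion to its generators: since the preceding proposition shows $(IJ)^{\natural}$ is an ideal, it suffices to verify that for each generator $xy$ of $I^{\natural}J^{\natural}$, with $x \in I^{\natural}$ and $y \in J^{\natural}$, one has $xy \in (IJ)^{\natural}$.

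First I would unpack the hypotheses to produce integers $n_1, n_2 \geq 1$ with $x^{n_1} \in (I^{n_1})^{\boxed{\#}}$ and $y^{n_2} \in (J^{n_2})^{\boxed{\#}}$. Setting $n = n_1 n_2$, I would then compute $(xy)^n = (x^{n_1})^{n_2}(y^{n_2})^{n_1}$, so that $(xy)^n$ lies in the product of ideals $\bigl((I^{n_1})^{\boxed{\#}}\bigr)^{n_2}\cdot \bigl((J^{n_2})^{\boxed{\#}}\bigr)^{n_1}$.

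Next I would absorb the outer powers inside the $\boxed{\#}$-closure using Lemma \ref{lem-contains}. A short induction on the number of factors upgrades that lemma to the statement $K_1^{\boxed{\#}} K_2^{\boxed{\#}} \cdots K_m^{\boxed{\#}} \subseteq (K_1 K_2 \cdots K_m)^{\boxed{\#}}$ for any ideals $K_1,\dots,K_m$. Taking each factor equal to $I^{n_1}$ (respectively $J^{n_2}$) gives $\bigl((I^{n_1})^{\boxed{\#}}\bigr)^{n_2} \subseteq (I^{n_1 n_2})^{\boxed{\#}} = (I^n)^{\boxed{\#}}$ and similarly $\bigl((J^{n_2})^{\boxed{\#}}\bigr)^{n_1} \subseteq (J^n)^{\boxed{\#}}$. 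One more application of Lemma \ref{lem-contains} yields $(I^n)^{\boxed{\#}}(J^n)^{\boxed{\#}} \subseteq (I^n J^n)^{\boxed{\#}} = ((IJ)^n)^{\boxed{\#}}$, so combining these chains gives $(xy)^n \in ((IJ)^n)^{\boxed{\#}}$, which is precisely the defining condition for $xy$ to lie in $(IJ)^{\natural}$.

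The main obstacle is mostly bookkeeping rather than a conceptual issue: one has to be careful to choose the common exponent $n = n_1 n_2$ so that both $x^n$ and $y^n$ land in a power of a $\boxed{\#}$-closure, and then to iterate Lemma \ref{lem-contains} enough times to push the product structure inside a single $\boxed{\#}$. Once the iterated version of that lemma is observed, the rest of the argument is a direct unwinding of the definition of $\natural$.
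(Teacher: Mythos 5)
Your proof is correct and follows essentially the same route as the paper's: reduce to generators $xy$, take $n = n_1 n_2$, expand $(xy)^n = (x^{n_1})^{n_2}(y^{n_2})^{n_1}$, and apply Lemma~\ref{lem-contains} to absorb the product into a single $\boxed{\#}$-closure. Your explicit remark that Lemma~\ref{lem-contains} must be iterated to handle the powers $\bigl((I^{n_1})^{\boxed{\#}}\bigr)^{n_2}$ is a useful clarification that the paper leaves implicit, but the underlying argument is identical.
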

\begin{proof}
Let $xy \in I^\natural J^\natural$, where $x \in I^\natural$ and $y \in J^\natural$. The ideal $I^\natural J^\natural$ is generated by elements of the form $xy$, so it suffices to show $xy \in  (IJ)^\natural$. We may write  $x^{n_1} \in (I^{n_1})^{\boxed{\#}}$ and $y^{n_2} \in (J^{n_2})^{\boxed{\#}}$ for some $n_1, n_2 \geq 1$. Then $(xy)^{n_1n_2} = (x^{n_1})^{n_2}(y^{n_2})^{n_1} \in ((I^{n_1})^{\boxed{\#}})^{n_2}((J^{n_2})^{\boxed{\#}})^{n_1}$. By Lemma \ref{lem-contains}, $(xy)^{n_1n_2} \in  (I^{n_1n_2})^{\boxed{\#}}(J^{n_1n_2})^{\boxed{\#}} \subseteq (I^{n_1n_2}J^{n_1n_2})^{\boxed{\#}} =  ((IJ)^{n_1n_2})^{\boxed{\#}}$. Hence $xy \in (IJ)^\natural, $ as desired.
\end{proof}

\begin{lem}\label{lem-nat-sharp-sharp}
Let $A$ be a ring and let $I \subseteq A$ be an ideal. Then $(I^{\natural})^{\#} = I^\natural$ and $(I^{\natural})^{\boxed{\#}} = I^\natural$.
\end{lem}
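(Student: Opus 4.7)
The plan is to prove the first equality $(I^{\natural})^{\#} = I^{\natural}$ directly, and then deduce the second from it by iteration together with the definition of $\boxed{\#}$ as an ascending union. The inclusion $I^{\natural} \subseteq (I^{\natural})^{\#}$ is part of Proposition~\ref{prop-sharp-closure}, so the content is entirely in the reverse inclusion.

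For the reverse inclusion, I would take $x \in (I^{\natural})^{\#}$ and use the ``finite sum'' description of $\#$ from the opening remark to write $x = x_1 + \cdots + x_r$ where, for each $i$, there exists $n_i \geq 1$ with $x_i^{n_i} \in (I^{\natural})^{n_i}$. The key step is to feed this into Lemma~\ref{lem-contains-nat}: applied iteratively with $I = J$, that lemma gives $(I^{\natural})^{n_i} \subseteq (I^{n_i})^{\natural}$, so $x_i^{n_i} \in (I^{n_i})^{\natural}$. Now I would unpack the definition of $\natural$ applied to the ideal $I^{n_i}$: there exists $m_i \geq 1$ such that $(x_i^{n_i})^{m_i} = x_i^{n_i m_i} \in ((I^{n_i})^{m_i})^{\boxed{\#}} = (I^{n_i m_i})^{\boxed{\#}}$, which is precisely the condition for $x_i$ to belong to $I^{\natural}$. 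Since $I^{\natural}$ is an ideal by the immediately preceding proposition, the sum $x = x_1 + \cdots + x_r$ also lies in $I^{\natural}$, completing the first equality.

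The second equality then follows by a one-line induction on $\ell$. From $(I^{\natural})^{\#} = I^{\natural}$ together with the recursive definition $(I^{\natural})^{\#_\ell} = ((I^{\natural})^{\#_{\ell - 1}})^{\#}$, we get $(I^{\natural})^{\#_\ell} = I^{\natural}$ for every $\ell \geq 1$. Taking the union over $\ell$ yields $(I^{\natural})^{\boxed{\#}} = \bigcup_{\ell \geq 1} (I^{\natural})^{\#_\ell} = I^{\natural}$.

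I do not anticipate a hidden obstacle here, since Lemma~\ref{lem-contains-nat} already encapsulates the interaction between ideal multiplication and the natural closure. The only subtlety is bookkeeping: one must apply the definition of $(\,\cdot\,)^{\natural}$ to the \emph{ideal} $I^{n_i}$ rather than to $I$ itself, so that the exponents $n_i$ and $m_i$ combine correctly to place $x_i^{n_i m_i}$ inside $(I^{n_i m_i})^{\boxed{\#}}$, matching the defining condition for membership in $I^{\natural}$.
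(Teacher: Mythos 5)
Your proof is correct and follows essentially the same route as the paper: reduce to generators of $(I^{\natural})^{\#}$, apply Lemma~\ref{lem-contains-nat} iteratively to get $(I^{\natural})^{n} \subseteq (I^{n})^{\natural}$, unwind the definition of $\natural$ applied to $I^{n}$ to produce the required power inside $(I^{nm})^{\boxed{\#}}$, and then bootstrap the first equality through the iterated $\#$ to the union defining $\boxed{\#}$. The only cosmetic difference is that you spell out the finite-sum decomposition and the appeal to $I^{\natural}$ being an ideal, whereas the paper compresses this to the phrase ``it suffices to show $x \in I^{\natural}$ for generators.''
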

\begin{proof}
We first show $(I^{\natural})^{\#} = I^\natural$. Let $x \in A$ be such that $x^{n_1} \in (I^\natural)^{n_1}$ for some $n_1 \geq 1$. Since $(I^\natural)^{\#}$ is generated by elements of this form, it suffices to show $x \in I^\natural$. By Lemma \ref{lem-contains-nat}, we see $x^{n_1} \in  (I^{n_1})^\natural$, so there exists some $n_2 \geq 1$ such that $(x^{n_1})^{n_2} \in ((I^{n_1})^{n_2})^{\boxed{\#}}$. Thus $x^{n_1n_2} \in (I^{n_1n_2})^{\boxed{\#}}$, so $x \in I^\natural$.

To show $(I^{\natural})^{\boxed{\#}} = I^\natural$, observe by the first part that $$I^\natural = (I^\natural)^{\#} = ((I^\natural)^{\#})^{\#} = \dots = (I^\natural)^{\#_\ell} = \dots,$$ where $\ell \geq 1$. Hence $(I^{\natural})^{\boxed{\#}} =  \bigcup\limits_{\ell \geq 1} (I^\natural)^{\#_{\ell}} = I^{\natural}.$
\end{proof}

Now we are ready to prove $\natural$ is a closure operation.

\begin{prop}\label{prop-nat-closure} Let $A$ be a ring. Then the mapping on ideals of $A$ given by $I \xrightarrow{} I^{\natural}$ is a closure operation.
\end{prop}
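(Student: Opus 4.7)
The plan is to verify the three conditions of Definition~\ref{defi-closure-op} in turn, leaning on the two lemmas that have just been proved.

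For condition (1), if $x \in I$ then $x^{1} \in I^{1} \subseteq (I^{1})^{\boxed{\#}}$ by Proposition~\ref{prop-box-closure}, so $x \in I^{\natural}$, giving $I \subseteq I^{\natural}$. For condition (2), suppose $I \subseteq J$ and $x \in I^{\natural}$, so $x^{n} \in (I^{n})^{\boxed{\#}}$ for some $n \geq 1$. Since $I^{n} \subseteq J^{n}$, applying monotonicity of $\boxed{\#}$ (Proposition~\ref{prop-box-closure}) gives $x^{n} \in (J^{n})^{\boxed{\#}}$, so $x \in J^{\natural}$.

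The main content is condition (3). The containment $I^{\natural} \subseteq (I^{\natural})^{\natural}$ is immediate from (1). For the reverse, let $x \in (I^{\natural})^{\natural}$, so there exists $n \geq 1$ with $x^{n} \in ((I^{\natural})^{n})^{\boxed{\#}}$. Iterating Lemma~\ref{lem-contains-nat} yields $(I^{\natural})^{n} \subseteq (I^{n})^{\natural}$, and then by monotonicity of $\boxed{\#}$ together with Lemma~\ref{lem-nat-sharp-sharp},
$$((I^{\natural})^{n})^{\boxed{\#}} \subseteq ((I^{n})^{\natural})^{\boxed{\#}} = (I^{n})^{\natural}.$$
Hence $x^{n} \in (I^{n})^{\natural}$, so there exists $m \geq 1$ with $(x^{n})^{m} \in ((I^{n})^{m})^{\boxed{\#}} = (I^{nm})^{\boxed{\#}}$. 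Thus $x^{nm} \in (I^{nm})^{\boxed{\#}}$, which gives $x \in I^{\natural}$, completing the proof.

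The only step that is not essentially automatic is the last one, where I need to unwind the nested $\natural$'s. The key observation is that Lemma~\ref{lem-contains-nat} (applied $n$ times) converts the outer $n$-th power of $I^{\natural}$ into an inner one, and then Lemma~\ref{lem-nat-sharp-sharp} absorbs the $\boxed{\#}$ that sits outside, after which a second application of the definition of $\natural$ finishes the job. I do not anticipate any obstacle beyond bookkeeping the two exponents $n$ and $m$ correctly.
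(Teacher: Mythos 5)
Your proof is correct and follows the paper's argument almost verbatim: the same two lemmas (Lemma~\ref{lem-contains-nat} to pass $(I^\natural)^n \subseteq (I^n)^\natural$, and Lemma~\ref{lem-nat-sharp-sharp} to absorb the outer $\boxed{\#}$) are applied in the same order, and the handling of the two exponents matches the paper's $n_1, n_2$ exactly.
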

\begin{proof}

Let $I \subseteq A$ be an ideal. We have $I \subseteq I^{\boxed{\#}}$, so $I \subseteq I^\natural$. Next, let $J$ be an ideal such that $I \subseteq J$. To show $I^{\natural} \subseteq J^{\natural}$, let $x \in I^\natural$. We may write $x^n \in (I^n)^{\boxed{\#}}$ for some $n \geq 1$. Then $x^n \in (J^n)^{\boxed{\#}}$. Hence $x \in J^\natural$. Finally, we prove $(I^\natural)^\natural = I^\natural$. For the forward containment, let $x \in (I^\natural)^\natural$. Write $x^{n_1} \in ((I^\natural)^{n_1})^{\boxed{\#}}$ for some ${n_1} \geq 1$. By Lemma \ref{lem-contains-nat}, we have $(I^\natural)^{n_1} \subseteq (I^{n_1})^\natural$, from which it follows $((I^\natural)^{n_1})^{\boxed{\#}} \subseteq ((I^{n_1})^\natural)^{\boxed{\#}}$. By Lemma \ref{lem-nat-sharp-sharp}, we have  $((I^{n_1})^\natural)^{\boxed{\#}} = (I^{n_1})^\natural$. Thus $x^{n_1} \in (I^{n_1})^\natural$, so we may write $(x^{n_1})^{n_2} \in ((I^{n_1})^{n_2})^\boxed{\#}$ for some ${n_2} \geq 1$. Then $x^{{n_1}{n_2}} \in (I^{{n_1}{n_2}})^{\boxed{\#}}$, which shows $x \in I^\natural$.
\end{proof}

\begin{rem}\label{rem-containment}
For a ring $A$ and ideal $I \subseteq A$, we have introduced the ideals $I^{\#}$, $I^\boxed{\#}$, and $I^\natural$. Related to these is the integral closure $\overline{I}$ of $I$, defined as follows: $$\overline{I} = \{ x \in A \mid \text{ there exist } a_i \in I^i, n \in \mathbb{N}, \text{ such that } x^n + a_1x^{n-1} + \dots + a_{n-1}x + a_n = 0 \}.$$ Among these ideals, the following containment relations always hold: $$I \subseteq I^{\#} \subseteq I^{\boxed{\#}} \subseteq I^\natural \subseteq \overline{I}.$$ The first three containments are clear from the definitions. To see that $I^\natural \subseteq \overline{I}$, we first show that $I^{\#} \subseteq \overline{I}$. Let $x \in A$ be such that $x^n \in I^n$ for some $n \geq 1$; say $x^n = c$, where $c \in I^n$. To show $I^{\#} \subseteq \overline{I}$, it suffices to show $x \in \overline{I}$. But the equation $x^n - c = 0$ shows $x \in \overline{I}$. Next, let $\ell \geq 1$. We prove $I^{\#_\ell} \subseteq \overline{I}$. The base case $\ell = 1$ was just handled. Now assume $I^{\#_\ell} \subseteq \overline{I}$. For induction, we need to show $I^{\#_{\ell+1}} \subseteq \overline{I}$. Applying $\#$ to the containment $I^{\#_\ell} \subseteq \overline{I}$, we find $I^{\#_{\ell+1}} \subseteq (\overline{I})^{\#} \subseteq \overline{(\overline{I})} = \overline{I}$. Finally, let $x \in I^\natural$. We may write  $x^n \in (I^n)^\boxed{\#}$ for some $n \geq 1$. Then write $x^n \in (I^n)^{\#_\ell}$ for some $\ell \geq 1$. Now $(I^n)^{\#_\ell} \subseteq \overline{I^n}$, so we find $x^n \in \overline{I^n}$. By a well-known property of integral closure, we find $x \in \overline{I}$.

In Section \ref{sec-ex}, for each of the containments above, we find an example of a ring $A$ and ideal $I \subseteq A$ such that the containment is proper. These examples show that the closure operations $\boxed{\#}, \natural,$ and integral closure are distinct operations. However, in the case where $A$ is a polynomial ring over a field and $I \subseteq A$ is a monomial ideal, the ideals $I^{\#}$, $I^{\boxed{\#}}$, $I^\natural$, and $\overline{I}$ all coincide:
\end{rem}

\begin{rem}\label{rem-monomial}
Let $A$ be a polynomial ring over a field, and let $I \subseteq A$ be an ideal generated by monomials. Consider the well-known characterization of the integral closure $\overline{I}$  of $I$: $$\overline{I} = \langle m \in A \mid m \text{ is a monomial and } m^n \in I^n \text{ for some } n \geq 1 \rangle.$$ (A proof of this can be found in \cite[Proposition 12.1.2]{Vi}.) From this characterization, we find $\overline{I} \subseteq \langle x \in A \mid x^n \in I^n \text{ for some $n \geq 1$} \rangle = I^{\#}$.  As discussed in Remark \ref{rem-containment}, we have $I^{\#} \subseteq \overline{I}$. Hence we find that $I^{\#} = I^{\boxed{\#}} = I^\natural = \overline{I}$ in the monomial ideal case.
\end{rem}

We now prove a variety of useful properties of $\#$, $\boxed{\#}$, and $\natural$.

\begin{prop}[Persistence]\label{prop-persistence} Let $A \subseteq B$ be an extension of rings, and let $I \subseteq A$ be an ideal. The following containments hold:
\begin{enumerate}
\item $I^{\#}B \subseteq (IB)^{\#}$.
\item $I^{\boxed{\#}}B \subseteq (IB)^{\boxed{\#}}$.
\item $I^{\natural}B \subseteq (IB)^{\natural}$.
\end{enumerate}
\end{prop}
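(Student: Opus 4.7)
All three containments will be deduced from a single generator-level observation: every $x \in A$ with $x^n \in I^n$ also satisfies $x^n \in (IB)^n$, since $I^n \subseteq (IB)^n$. This immediately gives the set-level inclusion $I^{\#} \subseteq (IB)^{\#}$ when we view $I^{\#} \subseteq A$ inside $B$ via the extension. Since $(IB)^{\#}$ is an ideal of $B$, multiplying by $B$ preserves the containment, yielding $I^{\#}B \subseteq (IB)^{\#}$, which is (1).

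For (2), I would induct on $\ell$ to prove $I^{\#_\ell} \subseteq (IB)^{\#_\ell}$. The base case is the generator argument above. For the inductive step, suppose $I^{\#_{\ell-1}} \subseteq (IB)^{\#_{\ell-1}}$. If $x \in A$ satisfies $x^n \in (I^{\#_{\ell-1}})^n$, then the inductive hypothesis gives $x^n \in ((IB)^{\#_{\ell-1}})^n$, so $x \in (IB)^{\#_\ell}$. Hence every generator of $I^{\#_\ell}$ lies in $(IB)^{\#_\ell}$, and therefore $I^{\#_\ell} \subseteq (IB)^{\#_\ell}$. Taking unions over $\ell$ and using that each $(IB)^{\#_\ell}$ is a $B$-ideal, I get
\[
I^{\boxed{\#}}B = \bigcup_{\ell \geq 1} I^{\#_\ell} B \subseteq \bigcup_{\ell \geq 1} (IB)^{\#_\ell} = (IB)^{\boxed{\#}}.
\]

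For (3), I would apply (2) to the ideal $I^n$ of $A$ rather than to $I$ itself. Given $x \in I^{\natural}$, choose $n \geq 1$ with $x^n \in (I^n)^{\boxed{\#}}$. Then by (2),
\[
x^n \in (I^n)^{\boxed{\#}} \subseteq (I^n)^{\boxed{\#}} B \subseteq (I^n B)^{\boxed{\#}} = ((IB)^n)^{\boxed{\#}},
\]
so $x \in (IB)^{\natural}$. This gives $I^{\natural} \subseteq (IB)^{\natural}$, and since $(IB)^{\natural}$ is a $B$-ideal, multiplying by $B$ produces $I^{\natural} B \subseteq (IB)^{\natural}$.

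\textbf{Main obstacle.} There is no serious obstacle; the argument is essentially bookkeeping. The only subtlety is to keep straight that $I^{\#}$, $I^{\boxed{\#}}$, and $I^{\natural}$ are ideals of $A$, while $(IB)^{\#}$, $(IB)^{\boxed{\#}}$, and $(IB)^{\natural}$ are ideals of $B$, so one must verify the generator containments as subsets of $B$ before invoking the $B$-ideal structure to multiply by $B$. The only place where one has to think slightly is in (3), where the natural persistence is for $I^n$, not $I$, because the definition of $\natural$ references $(I^n)^{\boxed{\#}}$.
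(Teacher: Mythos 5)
Your proof is correct and follows essentially the same route as the paper's: verify the containment on generators, induct on $\ell$ for the boxed-sharp case, and apply (2) to $I^n$ for the natural closure. The only cosmetic difference is that you first establish the inclusions $I^{\#_\ell}\subseteq(IB)^{\#_\ell}$ and $I^\natural\subseteq(IB)^\natural$ as subsets of $B$ and then multiply by $B$, whereas the paper checks the generators $xb$ of $I^{\square}B$ directly; both are equivalent.
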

\begin{proof}
We begin by proving (1). The ideal $I^{\#}B$ is generated by elements of the form $xb$, where $x \in A$ satisfies $x^n$ for some $n \geq 1$ and $b \in B$. It suffices to show $xb \in (IB)^{\#}$. To this end, notice that $(xb)^n = x^nb^n \in I^nB = (IB)^n$. Thus $xb \in (IB)^{\#}$.

Before proving (2), we prove inductively that for all $\ell \geq 1$, $I^{\#_\ell}B \subseteq (IB)^{\#_\ell}$. The base case $\ell = 1$ was handled in (1). Now, fix $\ell$ and assume $I^{\#_\ell}B \subseteq (IB)^{\#_\ell}$. We need to show $I^{\#_{\ell+1}}B \subseteq (IB)^{\#_{\ell+1}}$. But $I^{\#_{\ell+1}}B = (I^{\#_{\ell}})^{\#}B \subseteq (I^{\#_{\ell}}B)^{\#} \subseteq ((IB)^{\#_\ell})^{\#} = (IB)^{\#_{\ell+1}}$, as desired.

Next, we prove (2). The ideal $I^{\boxed{\#}}B$ is generated by elements of the form $xb$, where $x \in I^{\boxed{\#}}$ and $b \in B$. It suffices to show $xb \in (IB)^{\boxed{\#}}$. We may write $x \in I^{\#_\ell}$ for some $\ell \geq 1$. Then $xb \in I^{\#_\ell}B \subseteq (IB)^{\#_\ell} \subseteq (IB)^{\boxed{\#}}$.

Finally, we prove (3). The ideal $I^{\natural}B$ is generated by elements of the form $xb$, where $x \in I^{\natural}$ and $b \in B$. It suffices to show $xb \in (IB)^\natural$. We may write $x^n \in (I^n)^{\boxed{\#}}$ for some $n \geq 1$. Then $(xb)^n = x^nb^n \in (I^n)^{\boxed{\#}}B \subseteq ((I^n)B)^{\boxed{\#}} = ((IB)^n)^{\boxed{\#}}$, so  $xb \in (IB)^\natural$.
\end{proof}

The operations $\#$, $\boxed{\#}$, and $\natural$ commute with localization.

\begin{prop}[Localization]\label{prop-localization} Let $A$ be a ring, let $S \subseteq A$ be a multiplicatively closed subset, and let $I \subseteq A$ be an ideal. Then the following equalities hold:
\begin{enumerate}
\item $S^{-1}I^{\#} = (S^{-1}I)^{\#}$.
\item $S^{-1}I^\boxed{\#} = (S^{-1}I)^{\boxed{\#}}$.
\item $S^{-1}I^\natural = (S^{-1}I)^{\natural}$.
\end{enumerate}
\end{prop}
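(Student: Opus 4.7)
The plan is to derive the inclusion $\subseteq$ in each part from the persistence property of Proposition~\ref{prop-persistence} applied to the extension $A \hookrightarrow S^{-1}A$, and to establish the reverse inclusion $\supseteq$ by a uniform denominator-clearing trick.

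For part (1), persistence gives $S^{-1}I^{\#} = I^{\#}(S^{-1}A) \subseteq (I \cdot S^{-1}A)^{\#} = (S^{-1}I)^{\#}$. For the reverse, I would take a generator $y/t$ of $(S^{-1}I)^{\#}$, i.e., an element of $S^{-1}A$ satisfying $(y/t)^n \in (S^{-1}I)^n = S^{-1}(I^n)$ for some $n \geq 1$. Clearing denominators yields some $v \in S$ with $v y^n \in I^n$ inside $A$. The key observation is that
\[
(vy)^n = v^{n-1} \cdot (v y^n) \in v^{n-1}\, I^n \subseteq I^n,
\]
since $I^n$ is an ideal of $A$. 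Hence $vy \in I^{\#}$, and therefore $y/t = vy/(vt) \in S^{-1}I^{\#}$.

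For part (2), I would first prove by induction on $\ell$ that $S^{-1}(I^{\#_\ell}) = (S^{-1}I)^{\#_\ell}$. The base case $\ell = 1$ is part (1); the inductive step applies part (1) to the ideal $I^{\#_\ell}$ and uses the inductive hypothesis to commute localization past the outermost $\#$. Passing to the ascending union over $\ell$ then gives $S^{-1}I^{\boxed{\#}} = (S^{-1}I)^{\boxed{\#}}$. For part (3), persistence again supplies $\subseteq$. For the reverse, let $y/t \in (S^{-1}I)^{\natural}$, so $(y/t)^n \in ((S^{-1}I)^n)^{\boxed{\#}} = (S^{-1}(I^n))^{\boxed{\#}} = S^{-1}((I^n)^{\boxed{\#}})$, where the last equality is part (2) applied to the ideal $I^n$. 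Clearing denominators as before produces $v \in S$ with $v y^n \in (I^n)^{\boxed{\#}}$, and then the same identity $(vy)^n = v^{n-1}(v y^n) \in (I^n)^{\boxed{\#}}$ shows $vy \in I^{\natural}$, whence $y/t = vy/(vt) \in S^{-1}I^{\natural}$.

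The main obstacle is the denominator-clearing step: once a power of $y/t$ is pulled back to $A$, it is only $v y^n$ (rather than $(vy)^n$) that is known to lie in $I^n$. The fix is to exploit the fact that $I^n$ is an ideal, so multiplying by $v^{n-1}$ upgrades $v y^n \in I^n$ to $(vy)^n \in I^n$; this same maneuver works verbatim with $I^n$ replaced by $(I^n)^{\boxed{\#}}$ in part (3), which is what allows all three parts to be handled by the same trick.
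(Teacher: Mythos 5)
Your proposal is correct and follows the same overall strategy as the paper: persistence (Proposition~\ref{prop-persistence}) for the forward containments, denominator-clearing for the reverse containments, induction on $\ell$ to bootstrap from $\#$ to the $\#_\ell$ and then to $\boxed{\#}$, and reduction to part (2) for $\natural$. The one place you genuinely streamline the argument is the reverse containment in part (1): you observe that since $S^{-1}I^{\#}$ is an ideal, it suffices to place each \emph{generator} $y/t$ of $(S^{-1}I)^{\#}$ (i.e., an element with $(y/t)^n \in (S^{-1}I)^n$) into $S^{-1}I^{\#}$, and then the identity $(vy)^n = v^{n-1}(vy^n)$ does the rest. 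The paper instead takes an arbitrary element $x \in (S^{-1}I)^{\#}$, expresses it as a finite sum of such generators, passes to a common exponent $n$, and then has to find a second scalar $s' \in S$ so that the equality $s'sx = \sum_i s'sb_i$ holds in $A$ (to account for the localization map not being injective); your restriction to a single generator bypasses this extra bookkeeping entirely. The algebraic trick that makes the clearing of denominators close up is the same one the paper uses: the paper's computation $(s'sb_i)^n = (s')^n s^{n-1}(sb_i^n) \in I^n$ is exactly your $v^{n-1}(vy^n) \in I^n$ in slightly different clothing. One small cosmetic remark: it would be worth saying explicitly that it suffices to check generators because $S^{-1}I^{\#}$ is an ideal; this is the crux of why your reduction is legitimate even though the generating set of $I^{\#}$ is not itself an ideal (cf.\ the remark following Definition~\ref{def-ideal}).
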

\begin{proof}
The forward containments of (1), (2), and (3) each follow directly from Prop. \ref{prop-persistence}.

We begin by proving (1). Let $x \in  (S^{-1}I)^{\#}$. We may write $x = b_1 + \dots + b_r$ for some $b_i \in S^{-1}A$ satisfying $b_i^{n_i} \in S^{-1}I^{n_i}$ for all $i$. Define $n \in \mathbb{N}$ to be the product of the elements $n_1, \dots, n_r$. Notice $b_i^{n} \in S^{-1}I^{n}$ for all $i$. Clearing denominators, we see there exists $s \in S$ such that $sb_i \in A$ for all $i$ and $sb_i^n \in I^n$ for all $i$. Then $sx = sb_1 + \dots + sb_r$ is an equality which holds in $S^{-1}A$ where all terms are in $A$. So, there exists $s' \in S$ such that $s'sx = s'sb_1 + \dots + s'sb_r$ holds as an equality in $A$. Since $(s'sb_i)^n = (s')^ns^{n-1}(sb_i^n) \in I^n$ for all $i$, we find that $s'sx \in I^{\#}$. Hence $x \in S^{-1}I^{\#}.$

Before proving (2), we prove inductively that for all $\ell \geq 1$, $S^{-1}I^{\#_\ell} = (S^{-1}I)^{\#_\ell}$. The base case $\ell = 1$ was handled in (1). Now, fix $\ell$ and assume $S^{-1}I^{\#_\ell} = (S^{-1}I)^{\#_\ell}$. We need to show $S^{-1}I^{\#_{\ell+1}} = (S^{-1}I)^{\#_{\ell+1}}$.  But $S^{-1}I^{\#_{\ell+1}} = S^{-1}(I^{\#_{\ell}})^{\#} = (S^{-1}I^{\#_{\ell}})^{\#} = ((S^{-1}I)^{\#_\ell})^{\#} = (S^{-1}I)^{\#_{\ell+1}}$, as desired.

Next, we prove (2). Let $x \in (S^{-1}I)^{\boxed{\#}}$. We may write $x \in (S^{-1}I)^{\#_\ell}$ for some $\ell \geq 1$. Then $x \in S^{-1}I^{\#_\ell} \subseteq S^{-1}I^{\boxed{\#}}$, as desired.

Finally, we prove (3). Let $x \in (S^{-1}I)^\natural$. For some $n \geq 1$, we may write $x^n \in ((S^{-1}I)^n)^\boxed{\#} = (S^{-1}I^n)^{\boxed{\#}} = S^{-1}(I^n)^{\boxed{\#}}$, where the last equality follows from 2. By clearing denominators, we see there exists $s \in S$ such that $sx^n \in (I^n)^\boxed{\#}$, so $(sx)^n = s^nx^n \in (I^n)^\boxed{\#}$. Then $sx \in I^\natural$, so $x \in  S^{-1}I^\natural$.

\end{proof}

\begin{prop}[Colon]\label{prop-colon} Let $A$ be a ring and let $I, J$ be ideals of $A$. Then the following containments hold:
\begin{enumerate}
  \item $(I : J)^{\#} \subseteq (I^{\#} : J).$
  \item $(I : J)^{\boxed{\#}} \subseteq (I^{\boxed{\#}} : J).$
  \item $(I : J)^{\natural} \subseteq (I^{\natural} : J).$
\end{enumerate}
\end{prop}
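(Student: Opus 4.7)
The engine driving all three parts is the elementary identity $(I:J) \cdot J \subseteq I$, which upon raising to the $n$-th power becomes $(I:J)^n J^n = ((I:J)J)^n \subseteq I^n$; all three statements follow by threading this observation through the three sharp operations. For (1), since $I^{\#}$ is an ideal, it suffices to show $xj \in I^{\#}$ for each generator $x$ of $(I:J)^{\#}$ (that is, an element with $x^n \in (I:J)^n$ for some $n$) and each $j \in J$; but $(xj)^n = x^n j^n \in (I:J)^n J^n \subseteq I^n$, so $xj \in I^{\#}$.

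For (2), the plan is to prove by induction on $\ell \geq 1$ the refined statement $(I:J)^{\#_\ell} \subseteq (I^{\#_\ell} : J)$, from which (2) follows by taking unions over $\ell$. The base case is (1). For the inductive step, applying monotonicity of $\#$ (Proposition \ref{prop-sharp-closure}) to the inductive hypothesis gives
\[(I:J)^{\#_{\ell+1}} = ((I:J)^{\#_\ell})^{\#} \subseteq (I^{\#_\ell} : J)^{\#},\]
and then invoking part (1) with $I^{\#_\ell}$ in place of $I$ yields $(I^{\#_\ell} : J)^{\#} \subseteq ((I^{\#_\ell})^{\#} : J) = (I^{\#_{\ell+1}} : J)$, completing the induction.

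For (3), given $x \in (I:J)^{\natural}$ with $x^n \in ((I:J)^n)^{\boxed{\#}}$ for some $n \geq 1$, I would compute, for each $j \in J$, that $(xj)^n = x^n j^n \in ((I:J)^n)^{\boxed{\#}} \cdot J^n$. Since $J^n \subseteq (J^n)^{\boxed{\#}}$, Lemma \ref{lem-contains} gives
\[((I:J)^n)^{\boxed{\#}} J^n \subseteq ((I:J)^n)^{\boxed{\#}} (J^n)^{\boxed{\#}} \subseteq ((I:J)^n J^n)^{\boxed{\#}} = (((I:J)J)^n)^{\boxed{\#}} \subseteq (I^n)^{\boxed{\#}},\]
so $xj \in I^{\natural}$. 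The only real subtlety is the choice of induction hypothesis in (2): proving the boxed-sharp containment $(I:J)^{\boxed{\#}} \subseteq (I^{\boxed{\#}} : J)$ directly runs into trouble because an element of $(I:J)^{\boxed{\#}}$ lives in some $(I:J)^{\#_\ell}$ but the number of ``layers'' on the right must match, forcing the finer level-by-level statement. Once that is in place, the remainder is a mechanical combination of the identity $((I:J)J)^n \subseteq I^n$ with the monotonicity of the sharp operations and Lemma \ref{lem-contains}.
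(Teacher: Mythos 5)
Your proof is correct and follows essentially the same strategy as the paper's: reduce to generators in (1), prove the level-by-level refinement $(I:J)^{\#_\ell}\subseteq(I^{\#_\ell}:J)$ by induction in (2), and thread the identity $(I:J)J\subseteq I$ through the closure in (3). The only cosmetic difference is in (3), where you invoke Lemma~\ref{lem-contains} on the product $((I:J)^n)^{\boxed{\#}}(J^n)^{\boxed{\#}}$ rather than applying part (2) to the colon $(I^n:J^n)$ as the paper does; both routes rest on the same observation and are equally valid.
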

\begin{proof}
We begin by proving (1). Let $x \in A$ be such that $x^n  \in (I : J)^{n}$. Since elements of this form generate $(I : J)^{\#}$, it suffices to show $x \in (I^{\#} : J)$. But $x^n \in (I : J)^n \subseteq (I^n : J^n)$, which implies $x^nJ^n \subseteq I^n$. Hence, for $j \in J$, we have $(xj)^n = x^nj^n \in I^n$ which implies $xj \in I^\#$; thus $x \in (I^\# : J)$, as desired.

Before proving (2), we prove inductively that for all $\ell \geq 1$, $(I : J)^{\#_\ell} \subseteq (I^{\#_\ell} : J).$ The base case $\ell = 1$ was handled in (1). Now, fix $\ell$ and assume $(I : J)^{\#_\ell} \subseteq (I^{\#_\ell} : J).$ We need to show $(I : J)^{\#_{\ell+1}} \subseteq (I^{\#_{\ell+1}} : J).$ But $(I : J)^{\#_{\ell+1}} = ((I : J)^{\#_{\ell}})^{\#} \subseteq (I^{\#_{\ell}} : J)^{\#} \subseteq (I^{\#_{\ell + 1}} : J),$ as desired.

Next, we prove (2). Let $x \in (I : J)^{\boxed{\#}}$. We may write $x \in (I : J)^{\#_{\ell}}$ for some $\ell \geq 1$. Then $x \in  (I^{\#_\ell} : J) \subseteq  (I^{\boxed{\#}} : J)$, as desired.

To prove (3), let $x \in (I : J)^\natural$. We may write $x^n \in ((I : J)^n)^{\boxed{\#}}$ for some $n \geq 1$. But $(I : J)^n \subseteq (I^n : J^n)$, so $x^n \in (I^n : J^n)^\boxed{\#} \subseteq ((I^n)^\boxed{\#} : J^n)$, by (2). Hence, for $j \in J$, we have $(xj)^n = x^nj^n \in (I^n)^\boxed{\#}$, which implies $xj \in I^\natural$. Thus $x \in (I^\natural : J)$, as desired.
\end{proof}

\section{Root closure of the Rees algebra}\label{sec-rees}

 Let $A$ be a ring, let $I \subseteq A$ be an ideal, and let $t$ be an indeterminate over $A$. In this section, we will consider the extension of rings $A[It] \subseteq A[t]$, where $A[It]$ has the following $\mathbb{N}$-graded structure: $$ A[It] = \bigoplus\limits_{n \in \mathbb{N}} I^nt^n = A \oplus It \oplus I^2t^2 \oplus \dots$$

We investigate the root closure $A[It]^{\boxed{\#}}$ of $A[It]$ in $A[t]$. We will prove the following:

\begin{thm}\label{thm}
Let $A$ be a ring, let $I \subseteq A$ be an ideal, and let $t$ be an indeterminate over $A$. Consider the extension of rings $A[It] \subseteq A[It]^{\boxed{\#}} \subseteq A[t]$. Define $$S = \bigoplus\limits_{n \in \mathbb{N}} (I^n)^\natural t^n = A \oplus I^\natural t \oplus (I^2)^\natural t^2 \oplus \dots .$$ The subring $S$ of $A[t]$ is equal to the root closure $A[It]^{\boxed{\#}}$.
\end{thm}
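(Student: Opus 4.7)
The plan is to prove both inclusions $S\subseteq A[It]^{\boxed{\#}}$ and $A[It]^{\boxed{\#}}\subseteq S$. Lemma \ref{lem-contains-nat} gives $(I^n)^\natural(I^m)^\natural\subseteq(I^{n+m})^\natural$, which makes $S=\bigoplus_n(I^n)^\natural t^n$ a graded $A$-subalgebra of $A[t]$, and since $I^n\subseteq(I^n)^\natural$ we automatically have $A[It]\subseteq S$.

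For the forward containment, I first establish by induction on $\ell\geq1$ the auxiliary claim: if $y\in(I^m)^{\#_\ell}$, then $yt^m\in A[It]^{\#_\ell}$. The base case observes that whenever $z^n\in I^{mn}$ one has $(zt^m)^n=z^nt^{mn}\in A[It]$. For the inductive step, a generator $z$ of $(I^m)^{\#_\ell}$ satisfies $z^n\in((I^m)^{\#_{\ell-1}})^n$, which expands as a sum of $n$-fold products of elements of $(I^m)^{\#_{\ell-1}}$; the inductive hypothesis places each factor-times-$t^m$ in $A[It]^{\#_{\ell-1}}$, so $(zt^m)^n\in A[It]^{\#_{\ell-1}}$ and $zt^m\in A[It]^{\#_\ell}$. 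Passing to the union yields $y\in(I^m)^{\boxed{\#}}\Rightarrow yt^m\in A[It]^{\boxed{\#}}$. Finally, for $x\in(I^n)^\natural$ choose $k$ with $x^k\in(I^{nk})^{\boxed{\#}}$: the auxiliary claim places $(xt^n)^k=x^kt^{nk}$ in $A[It]^{\boxed{\#}}$, and root-closedness promotes this to $xt^n\in A[It]^{\boxed{\#}}$. Since these homogeneous generators span $S$, the containment follows.

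For the reverse containment, since $A[It]^{\boxed{\#}}$ is the smallest root-closed subring of $A[t]$ containing $A[It]$, it suffices to show $S$ itself is root closed in $A[t]$. So suppose $f=\sum_{j=0}^{d}f_jt^j\in A[t]$ has $f^m=\sum_kc_kt^k\in S$, meaning $c_k\in(I^k)^\natural$ for every $k$; the target is $f_j\in(I^j)^\natural$ for every $j$. The top case is automatic: $c_{dm}=f_d^m\in((I^d)^m)^\natural$ unpacks via the definition of $\natural$ to $f_d\in(I^d)^\natural$. I then run a descending induction on $j$: assuming $f_j\in(I^j)^\natural$ for all $j>j_0$, a combinatorial count shows that the tuples $(i_1,\ldots,i_m)\in[0,d]^m$ summing to $j_0+(m-1)d$ are exactly the $m$ permutations of $(j_0,d,\ldots,d)$ together with tuples whose entries all lie in $[j_0+1,d]$. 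This produces the decomposition $c_{j_0+(m-1)d}=m\,f_{j_0}f_d^{m-1}+P(f_{j_0+1},\ldots,f_d)$, where Lemma \ref{lem-contains-nat} and the inductive hypothesis force $P\in(I^{j_0+(m-1)d})^\natural$; subtracting gives the product relation $m\,f_{j_0}f_d^{m-1}\in(I^{j_0+(m-1)d})^\natural$.

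The main obstacle is then the extraction step: recovering $f_{j_0}\in(I^{j_0})^\natural$ from this product relation without any regularity hypothesis on $f_d$. My plan is to raise the relation to a power $N$, yielding a containment of the form $f_{j_0}^Nf_d^{N(m-1)}\in(I^{N(j_0+(m-1)d)})^{\#_\alpha}$ at some specific level of the $\boxed{\#}$ union, then combine with a compatible containment $f_d^{K}\in(I^{dK})^{\#_\beta}$ supplied by $f_d\in(I^d)^\natural$, and chain these through Lemma \ref{lem-contains} across matching $\#$-levels in order to strip off the $f_d$-power and land a pure power $f_{j_0}^{N'}$ inside $(I^{j_0N'})^{\boxed{\#}}$, which by definition of $\natural$ is the desired statement $f_{j_0}\in(I^{j_0})^\natural$. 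I expect the cleanest organization of this step to proceed one $\#$-level at a time and thereby establish simultaneously that each $A[It]^{\#_\ell}$ is a graded subring of $A[t]$, so that the whole reverse inclusion reduces to an explicit identification of the graded pieces of $A[It]^{\boxed{\#}}$.
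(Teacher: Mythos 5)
Your forward containment $S\subseteq A[It]^{\boxed{\#}}$ is correct and matches the paper's argument essentially verbatim: your auxiliary claim is exactly the paper's Lemma~\ref{lem-thm-containment}, proved by the same induction on $\ell$, and the rest of the deduction for a homogeneous generator $xt^n$ with $x\in(I^n)^\natural$ is the same.

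The reverse containment has a genuine gap at precisely the spot you flag. You correctly reduce to showing $S$ is root closed in $A[t]$, but then you try to verify this for an arbitrary (non-homogeneous) $f=\sum_j f_jt^j$ with $f^m\in S$, and the extraction step you describe does not go through. Two independent obstructions appear in the relation $m\,f_{j_0}f_d^{m-1}\in(I^{j_0+(m-1)d})^\natural$. First, the coefficient $m$ may kill the statement outright: in positive characteristic $p$ with $p\mid m$ the left side is $0$ and the relation carries no information (and all of the paper's own examples live over $\mathbb{Z}_2$, so this is not a corner case). Second, even when $m$ is harmless, there is no ring-theoretic mechanism to strip the factor $f_d^{m-1}$. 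Raising to a power $N$ just produces $m^Nf_{j_0}^Nf_d^{N(m-1)}\in(I^{N(j_0+(m-1)d)})^\natural$, still with the unwanted factor, and combining with $f_d^K\in(I^{dK})^{\boxed{\#}}$ via Lemma~\ref{lem-contains} only lets you \emph{multiply} closures together --- it goes the wrong direction and cannot cancel a factor. Knowing that $uf_d^{N(m-1)}$ lies in some ideal-theoretic closure tells you nothing about $u$ unless $f_d$ has a regularity property you are not entitled to assume; e.g.\ in $A=k[x,y]/(xy)$ with $f_d=x$ and $f_{j_0}=y$ the product vanishes while $f_{j_0}$ lies in no nontrivial ideal.

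The missing idea, which the paper supplies as Lemma~\ref{lem-thm-rev-containment} (Roitman, \cite[Corollary 1.5]{Ro}), is that for an extension $A\subseteq B$ of $\mathbb{N}$-graded rings the root closure $A^{\boxed{\#}}$ is already the smallest subring of $B$ stable under taking roots of \emph{homogeneous} elements. This lets one skip the descending induction on degree entirely: one only needs to treat $x=at^d$ with $x^n\in S$, i.e.\ $a^n\in(I^{dn})^\natural$, from which $a^{n\ell}\in((I^d)^{n\ell})^{\boxed{\#}}$ for some $\ell$ and hence $a\in(I^d)^\natural$. Without invoking that graded reduction, your plan stalls at the extraction step and cannot be completed by the tools available here.
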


Before proving Theorem \ref{thm}, we will need several lemmas. Lemma \ref{lem-thm-containment} will be utilized to prove the contaiment $S \subseteq A[It]^{\boxed{\#}}$. To prove $A[It]^{\boxed{\#}} \subseteq S$, we will show $S$ is a root closed subring of $A[t]$ containing $A[It]$. We will use a result from Roitman \cite[Corollary 1.5]{Ro} which allows us to check the root-closedness of $S$ in $A[t]$ by focusing on the \textit{homogeneous} elements $x$ of $A[t]$ such that $x^n \in S$, instead of any general such element.

\begin{lem}\label{lem-thm-containment}
Let $A$ be a ring, let $I \subseteq A$ be an ideal, and let $t$ be an indeterminate over $A$. Then for all $\ell \geq 1, n \in \mathbb{N}$, the following containment holds: $$ (I^n)^{\#_\ell}t^n \subseteq A[It]^{\#_\ell}.$$
\end{lem}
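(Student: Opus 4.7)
The plan is to proceed by induction on $\ell$, in each step reducing the claim about elements of $(I^n)^{\#_\ell}t^n$ to the corresponding \emph{ring}-theoretic $\#$-operation on $A[It]\subseteq A[t]$.

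For the base case $\ell = 1$, I would take a generator $y$ of the ideal $(I^n)^\#$, so that $y^m \in (I^n)^m = I^{nm}$ for some $m\geq 1$, and compute
\[
(yt^n)^m \;=\; y^m t^{nm} \;\in\; I^{nm} t^{nm} \;\subseteq\; A[It].
\]
By the ring-theoretic definition of $\#$ (Definition \ref{def-ring}), this means $yt^n \in A[It]^{\#}$. An arbitrary element of $(I^n)^\# t^n$ is an $A$-linear combination of such $yt^n$; since $A\subseteq A[It]\subseteq A[It]^\#$ and $A[It]^\#$ is a ring, these linear combinations remain inside $A[It]^\#$, giving the base case.

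For the inductive step, assume $(I^k)^{\#_\ell}t^k \subseteq A[It]^{\#_\ell}$ for all $k \in \mathbb{N}$, and let $y$ be a generator of $(I^n)^{\#_{\ell+1}} = ((I^n)^{\#_\ell})^{\#}$, so that $y^m \in ((I^n)^{\#_\ell})^m$ for some $m \geq 1$. The key move is to invoke the iterated form of Lemma \ref{lem-contains} (which yields $J^{\#_\ell}\cdot J^{\#_\ell}\subseteq (J^2)^{\#_\ell}$ and, by repetition, $(J^{\#_\ell})^m \subseteq (J^m)^{\#_\ell}$) with $J = I^n$ to conclude
\[
y^m \;\in\; \bigl((I^n)^{\#_\ell}\bigr)^m \;\subseteq\; (I^{nm})^{\#_\ell}.
\]
Then $(yt^n)^m = y^m t^{nm} \in (I^{nm})^{\#_\ell}\, t^{nm}$, which by the induction hypothesis lies in $A[It]^{\#_\ell}$. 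Hence $yt^n \in (A[It]^{\#_\ell})^{\#} = A[It]^{\#_{\ell+1}}$, and the same $A$-linearity argument as in the base case upgrades this from generators to all of $(I^n)^{\#_{\ell+1}}t^n$.

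The main subtlety will be keeping the \emph{ideal} operation $\#_\ell$ on $A$ and the \emph{ring} operation $\#_\ell$ on $A[It]\subseteq A[t]$ distinct in the notation while using them together; in particular, one has to be careful that when testing $yt^n \in A[It]^{\#}$ one only needs $(yt^n)^m \in A[It]$ (a ring membership), not a power equality inside some graded piece. The other place demanding care is the transition $((I^n)^{\#_\ell})^m \subseteq (I^{nm})^{\#_\ell}$, which is not stated verbatim as a standalone lemma but follows immediately from the proof of Lemma \ref{lem-contains} by iteration; everything else is bookkeeping.
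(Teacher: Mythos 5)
Your proposal is correct and matches the paper's proof step for step: same induction on $\ell$, same base case reduction to generators of $(I^n)^\#$, same use of the iterated form of Lemma \ref{lem-contains} to pass from $((I^n)^{\#_\ell})^m$ to $(I^{nm})^{\#_\ell}$ in the inductive step, and same conclusion via the ring-theoretic $\#$ on $A[It]$. The only difference is that you make explicit the $A$-linearity argument passing from generators to the full ideal, which the paper leaves implicit.
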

\begin{proof}
We proceed by induction on $\ell$. 

We first consider the case where $\ell = 1$. Fix $n \in \mathbb{N}$. We need to show $(I^n)^\#t^n \subseteq A[It]^\#$. Let $x \in A$ be such that $x^m \in (I^n)^m$ for some $m \geq 1$. The ideal $(I^n)^\#$ of $A$ is generated by elements of this form, so it suffices to show $xt^n \in A[It]^{\#}$. But $(xt^n)^m = x^mt^{nm} \in I^{nm}t^{nm} \subseteq A[It]$, so $xt^n \in A[It]^\#$, as desired.

Now, fix $\ell \geq 1$ and assume $(I^n)^{\#_\ell}t^n \subseteq A[It]^{\#_\ell}$ for all $n \in \mathbb{N}$. Fix $n \in \mathbb{N}$. We need to show $(I^n)^{\#_{\ell+1}}t^n \subseteq A[It]^{\#_{\ell+1}}$. Let $x \in A$ be such that $x^m \in ((I^n)^{\#_\ell})^m$ for some $m \geq 1$. The ideal $(I^n)^{\#_{\ell + 1}}$ of $A$ is generated by elements of this form, so it suffices to show $xt^n \in A[It]^{\#_{\ell + 1}}$.  But $ ((I^n)^{\#_{\ell}})^m \subseteq  (I^{nm})^{\#_{\ell}}$ by Lemma \ref{lem-contains}, so  $x^m \in (I^{nm})^{\#_{\ell}}$. This implies $(xt^n)^m = x^mt^{nm} \in (I^{nm})^{\#_{\ell}}t^{nm} \subseteq A[It]^{\#_\ell}$. Hence $xt^n \in  A[It]^{\#_{\ell+1}},$ as desired.
\end{proof}

\begin{lem}\cite[Corollary 1.5]{Ro}\label{lem-thm-rev-containment}
Let $A \subseteq B$ be an extension of $\mathbb{N}$-graded rings. Then $A^{\boxed{\#}}$ is the smallest subring $R$ of $B$ containing $A$ such that if $x^n \in R$ for some homogeneous $x \in B$ and $n \geq 1$, then $x \in R$.
\end{lem}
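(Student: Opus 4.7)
The statement has two parts: $A^{\boxed{\#}}$ satisfies the homogeneous-roots property, and it is the smallest such subring. The easy part follows from the preceding result in this section: $A^{\boxed{\#}}$ is the full root closure of $A$ in $B$, hence absorbs all roots, whether homogeneous or not.

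For minimality, let $R$ be any subring of $B$ containing $A$ with the homogeneous-roots property. My plan is to construct a specific graded, homogeneously root closed subring $R' \subseteq R$ that contains $A$, show that $R'$ is actually \emph{fully} root closed, and then conclude $A^{\boxed{\#}} \subseteq R' \subseteq R$ by the minimality of the root closure. Define $R' = \bigcup_{\ell \geq 0} R_\ell'$ with $R_0' = A$ and $R_{\ell + 1}' = R_\ell'[y \in B \mid y \text{ homogeneous and } y^n \in R_\ell' \text{ for some } n \geq 1]$. Each $R_\ell'$ is generated over the graded ring $A$ by homogeneous elements, so is a graded subring of $B$, and the same therefore holds for the ascending union $R'$. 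An easy induction using the homogeneous-roots property of $R$ gives $R_\ell' \subseteq R$ for each $\ell$, hence $R' \subseteq R$; and $R'$ itself visibly satisfies the homogeneous-roots property.

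To prove $R'$ is fully root closed, take $x \in B$ with $x^n \in R'$ and decompose $x = x_{i_1} + \cdots + x_{i_k}$ into nonzero homogeneous components of strictly increasing degrees $i_1 < \cdots < i_k$. Since $R'$ is graded and $x^n \in R'$, every homogeneous component of $x^n$ lies in $R'$; the degree-$n i_k$ component is the single monomial $x_{i_k}^n$ (the only expansion term achieving that degree), so $x_{i_k}^n \in R'$, and homogeneous root closedness yields $x_{i_k} \in R'$. The plan is then to induct on $k$: setting $y = x - x_{i_k}$ (which has $k-1$ nonzero homogeneous components), use the expansion $(y + x_{i_k})^n = x^n \in R'$ together with the known membership of all powers of $x_{i_k}$ in $R'$ to isolate a power $y^m \in R'$, and apply the inductive hypothesis to conclude $y \in R'$ and thus $x \in R'$. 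The main technical obstacle is managing the binomial cross terms involving intermediate powers of $y$ that are not a priori in $R'$; this is precisely where the careful argument of the cited Roitman [Ro, Corollary 1.5] does its work. Once $R'$ is shown to be root closed, minimality of the root closure gives $A^{\boxed{\#}} \subseteq R' \subseteq R$, and the proof is complete.
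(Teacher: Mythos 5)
The first thing to note is that the paper does not prove this lemma at all: it is imported verbatim from Roitman \cite[Corollary 1.5]{Ro} and used as a black box, so the only ``proof'' in the paper is the citation. Your reduction is fine as far as it goes: passing from an arbitrary subring $R$ with the homogeneous-root property to the graded subring $R' \subseteq R$ obtained by iteratively adjoining homogeneous roots is correct, and so is the observation that the degree-$n i_k$ component of $x^n$ is exactly $x_{i_k}^n$, so that gradedness plus homogeneous root closure captures the top homogeneous component of $x$.

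The decisive step, however, is missing. From $(y + x_{i_k})^n = x^n \in R'$ and $x_{i_k} \in R'$ you cannot ``isolate a power $y^m \in R'$'': the cross terms $\binom{n}{j} y^{n-j} x_{i_k}^{j}$ with $0 < j < n$ are not known to lie in $R'$, and passing to homogeneous components does not rescue this, because an intermediate-degree component of $x^n$ mixes contributions from several homogeneous components of $y$ (only the extreme degrees are pure $n$-th powers). You acknowledge the obstacle and defer it to the cited result --- but that corollary \emph{is} the statement being proved; indeed your key claim that a graded, homogeneously root closed subring is fully root closed is exactly the lemma applied to the extension $R' \subseteq B$. So as a standalone argument the proposal is circular at precisely the point where the real work lies. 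If the goal is to match the paper, the honest move is simply to cite Roitman, as the paper does; otherwise you must reproduce Roitman's argument for peeling off the remaining homogeneous components, which is not a routine binomial manipulation.
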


We now prove Theorem \ref{thm}.
\begin{proof}
We begin by proving $S \subseteq A[It]^{\boxed{\#}}$. Let $at^n \in S$ with $n \in \mathbb{N}$ and $a \in (I^n)^\natural$. It suffices to show $at^n \in A[It]^{\boxed{\#}}$. We may write $a^m \in ((I^n)^m)^{\boxed{\#}}$ for some $m \geq 1$. From this, we may write $a^m \in ((I^n)^m)^{\#_\ell} = (I^{nm})^{\#_\ell}$ for some $\ell \geq 1$. Now $(at^n)^m = a^mt^{nm} \in (I^{nm})^{\#_\ell}t^{nm}$. By Lemma \ref{lem-thm-containment}, we see $(at^n)^m \in A[It]^{\#_{\ell}}$, so $at^n \in A[It]^{\#_{\ell+1}} \subseteq A[It]^{\boxed{\#}}$, as desired.

Next we prove $A[It]^{\boxed{\#}} \subseteq S$. Since $A[It]^{\boxed{\#}}$ is the smallest root closed subring of $A[t]$ containing $A[It]$, it suffices to show $S$ is root closed in $A[t]$. To this end, first note that $S \subseteq A[t]$ is an extension of $\mathbb{N}$-graded rings.  Suppose $x^n \in S$ for some homogeneous $x \in A[t]$ and $n \geq 1$.  If we can prove that $x \in S$, then we will have shown by Lemma \ref{lem-thm-rev-containment} that $S = S^{\boxed{\#}}$;  i.e. that $S$ is root closed in $A[t]$. To this end, write $x = at^d$ for some $a \in A$ and $d \in \mathbb{N}$. Since $a^nt^{dn} = (at^d)^n  \in S$, we see $a^n \in (I^{dn})^\natural$. We may write $(a^{n})^{\ell} \in ((I^{dn})^\ell)^\boxed{\#}$ for some $\ell \geq 1$. Then $a^{n\ell} \in ((I^{d})^{n\ell})^\boxed{\#}$, which shows $a \in (I^d)^\natural$. Thus $at^d \in S$, as desired.
\end{proof}

\section{Examples}\label{sec-ex}

Let $R$ be a ring and let $I \subseteq R$ be an ideal. In Remark \ref{rem-containment} we describe the following chain of containments which always hold: $$I \underset{\text{Ex. } \ref{exa}}\subseteq I^{\#} \underset{\text{Ex. } \ref{exb}}\subseteq I^\boxed{\#} \underset{\text{Ex. } \ref{exc}}\subseteq I^\natural \underset{\text{Ex. } \ref{exd}}\subseteq \overline{I}.$$ For each containment, we have listed an example where we provide a ring $R$ and ideal $I \subseteq R$ such that the containment is proper.

\begin{Example}\label{exa}
Consider the polynomial ring $R = \mathbb{C}[x,y]$ and ideal $I = \langle x^2, y^2 \rangle \subseteq R$. As described in Remark \ref{rem-monomial}, since $I$ is a monomial ideal we have that $I^{\#} = \overline{I}$. Notice that $xy \in \overline{I}$, since $(xy)^2 - x^2y^2 = 0$ is an equation of integral dependence for $xy$ over $I$.  However, $xy \not \in I$, so we see that $I \neq I^\#$.
\end{Example}

Before proceeding, we prove a lemma which will be utilized heavily in the following examples.

\begin{lem}\label{lem-ex}
Let $k$ be a field, and let $R$ be a quotient of a standard-graded $k$-algebra by a homogeneous ideal. Let $I \subseteq R$ be a homogeneous proper ideal, and let $x \in I^{\#}$ be a nonzero homogeneous element of degree $1$. Then we may write $x = a_1 + \dots + a_s$, where $a_i^n \in I^n$ for some $n \geq 1$ and $a_1, \dots, a_s$ are homogeneous of degree $1$.
\end{lem}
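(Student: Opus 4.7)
My plan is to exploit the $\mathbb{N}$-grading of $R$ together with the generating set for $I^{\#}$. Let $T = \{y \in R \mid y^m \in I^m \text{ for some } m \geq 1\}$. Observe that $T$ is closed under scaling by $R$, since if $y \in T$ and $r \in R$ then $(ry)^m = r^m y^m \in I^m$. Hence every element of $I^{\#}$ is a finite \emph{sum} of elements of $T$: write $x = z_1 + \cdots + z_t$ with $z_i^{n_i} \in I^{n_i}$, and then replace each $n_i$ by the common value $N = n_1 \cdots n_t$, so that $z_i^N \in I^N$ for every $i$.

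Next I plan to strip off the degree-zero parts of the $z_i$. Since $R$ inherits the standard grading with $R_0 = k$ a field, and since $I$ is homogeneous and proper, $I_0 = I \cap R_0 = 0$. Consequently $I^N$ is also supported in positive degrees, and in fact $(I^N)_d = 0$ for $d < N$ while $(I^N)_N = I_1^N$. Decomposing $z_i = z_{i,0} + z_{i,1} + z_{i,2} + \cdots$ into homogeneous components and extracting the degree-$0$ part of the relation $z_i^N \in I^N$ gives $z_{i,0}^N \in (I^N)_0 = 0$, which forces $z_{i,0} = 0$ because $R_0 = k$ is a field.

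With $z_{i,0}$ eliminated, every nonzero homogeneous summand of $z_i$ has degree at least $1$. Expanding $z_i^N$ as a sum of $N$-fold products of these homogeneous pieces, the only way to reach total degree exactly $N$ is for every factor to be $z_{i,1}$; therefore the degree-$N$ component of $z_i^N$ equals $z_{i,1}^N$, and this lies in $(I^N)_N \subseteq I^N$. Setting $a_i := z_{i,1}$ and extracting the degree-$1$ component of the equation $x = \sum z_i$ (using that $x$ is homogeneous of degree $1$), I obtain $x = a_1 + \cdots + a_t$ with each $a_i$ homogeneous of degree $1$ satisfying $a_i^N \in I^N$ for the common exponent $N$.

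The main obstacle I anticipate is making the degree extraction precise enough to yield an honest $N$-th power relation in $I^N$, rather than a relation of the form $z_{i,1}^N + (\text{junk}) \in I^N$ from which $z_{i,1}^N$ cannot be isolated. This hinges entirely on the reduction $z_{i,0} = 0$: without it, the degree-$N$ component of $z_i^N$ would pick up cross terms involving $z_{i,0}$ and higher-degree pieces of $z_i$, and would not factor cleanly as $z_{i,1}^N$. The hypotheses that $I$ is proper and $R_0 = k$ is a field are exactly what enable this clean reduction.
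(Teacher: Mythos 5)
Your proof is correct and is essentially the paper's own argument: both replace the individual exponents by a common $N$, use the properness and homogeneity of $I$ to conclude that each $z_i$ has no degree-zero part (the paper phrases this as $\deg(a_i^*)>0$ for the lowest nonzero homogeneous component $a_i^*$), and then compare degree-$N$ homogeneous parts of $z_i^N\in I^N$ to isolate $z_{i,1}^N\in I^N$ before taking the degree-one component of $x=\sum z_i$. The only cosmetic difference is that you work directly with the components $z_{i,0}, z_{i,1}$ while the paper uses the $y^*$ notation and explicitly discards the summands whose lowest nonzero component has degree greater than one, which in your version corresponds to the harmless case $z_{i,1}=0$.
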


\begin{proof}For a nonzero element $y \in R$, let $y^*$ denote the smallest nonzero homogeneous component of $y$. Since $I$ is homogeneous, so is $I^n$. Then if $y^n \in I^n$ for some $n \geq 1$, then $(y^*)^n \in I^n$. 

Since $x \in I^{\#}$, we may write $x = a_1 + \dots + a_t$ for some $a_i \in R$ with $a_i^{n_i} \in I^{n_i}$ for some $n_i \geq 1$ for all $i$. Let $n$ denote the product of the elements $n_1, \dots, n_t$. Observe that $a_i^{n} \in I^{n}$ for all $i$.  Without loss of generality, we may assume $a_i  \neq 0$ for all $i$. Note that $\text{deg}_R(a_i^*) > 0$ for all $i$. Indeed, if $\text{deg}_R(a_i^*) = 0$ for some $i$, then $a_i^* \in k$. But $a_i^*$ is nonzero, so $a_i^*$ is a unit. Also note that $a_i^n \in I^n$ implies $(a_i^*)^n \in I^n$, so $I^n$ contains a unit. But $I^n \subseteq I$, which contradicts the properness of $I$.  Now, for some $i$, $\text{deg}_R(a_i^*) = 1$. Indeed, if we instead assume $\text{deg}_R(a_i^*) > 1$ for all $i$, taking the homogeneous component of degree $1$ of the equation $x = a_1 + \dots + a_t$ gives $x = 0$, a contradiction. 

Without loss of generality, we may reorder the elements $a_i$ so that $\text{deg}_R(a_i^*) = 1$ for $1 \leq i \leq s$ (for some some $s \leq t$) and $\text{deg}_R(a_i^*)  > 1$ otherwise. Taking the homogeneous component of degree $1$ of the equation $x = a_1 + \dots + a_t$ gives $x = a_1^* + \dots + a_s^*$. Observe that $(a_i^*)^n \in I^n$ and $\text{deg}_R(a_i^*) = 1$ for all $i$, $1 \leq i \leq s$, which completes the proof. 
\end{proof}

\begin{Example}\label{exb}
Let $R = \dfrac{\mathbb{Z}_2[A,B,T]}{\langle A^3 - B^3, T^3 - A^2B - AB^2 \rangle}$, where $A, B,$ and $T$ are indeterminates given degree $1$. Let $a, b,$ and $t$ refer respectively to the residue classes of $A, B,$ and $T$ in $R$. Let $I = \langle a \rangle \subseteq R$. We will show $I^{\#} \neq (I^{\#})^{\#}$ by proving $t \in (I^{\#})^{\#} \setminus I^\#$. Since $(I^{\#})^{\#} \subseteq I^{\boxed{\#}}$, this example also proves $I^{\#} \neq I^{\boxed{\#}}$.
\end{Example}
\begin{proof}
First note that $b^3 = a^3 \in I^3$, so $b \in I^{\#}$. Also, $t^3 = a^2b +ab^2 \in (I^{\#})^3$, so $t \in (I^{\#})^{\#}$. We will show that $t \not\in I^{\#}$. For the sake of contradiction, assume that $t \in I^{\#}$. Since $t \neq 0$ and $I$ is a proper ideal, by Lemma \ref{lem-ex} we may write $t  = x_1 + \dots + x_k$, where $x_i^n \in I^n$ for some $n \geq 1$ and $x_i$ is homogeneous of degree $1$  for all $i$, $1 \leq i \leq k$.

Note that $t \not\in \langle a, b \rangle$. Indeed, supposing otherwise, we find that $$T -  u(A,B,T)A - v(A,B,T)B \in \langle A^3 - B^3, T^3 - A^2B - BA^2 \rangle$$ for some $u, v \in \mathbb{Z}_2[A, B, T]$. Since the non-zero elements of the displayed ideal have no homogeneous terms of degree smaller than $3$, taking the homogeneous component of degree $1$ of the displayed containment provides $T - u_0A - v_0B = 0$, where $u_0, v_0 \in \mathbb{Z}_2$ are the constant terms of $u$ and $v$ respectively. This is a contradiction; no such relation between $T, A,$ and $B$ holds in $\mathbb{Z}_2[A,B,T]$. 

It follows that at least one element $x_i$, $1 \leq i \leq k$, is of the form $x_i =  t + \ell_1 a + \ell_2 b$ for some $\ell_1, \ell_2 \in \mathbb{Z}_2$. Indeed, otherwise the equation $t = x_1 + \dots + x_k$ shows $t \in \langle a,b \rangle$. However, we claim no element of the form $t + \ell_1 a + \ell_2 b$  satisfies $(t + \ell_1 a + \ell_2 b)^n \in I^n = \langle a^n \rangle$ for any $n \geq 1$, $\ell_1, \ell_2 \in \mathbb{Z}_2$, a contradiction. By taking the homogeneous component of degree $n$ of the last containment, we find that $(t + \ell_1 a + \ell_2 b)^n = ca^n$ for some $c \in \mathbb{Z}_2$. We check several cases:

\underline{Case 1:} $(c = 0)$. Since $c = 0$, we find that $(t + \ell_1a + \ell_2b)^n = 0$, so $t + \ell_1a + \ell_2 b$ belongs to the nilradical of $R$. Lifting to $\mathbb{Z}_2[A,B,T]$, we find $T + \ell_1A + \ell_2B \in \sqrt{\langle A^3 - B^3, T^3 - A^2B - AB^2 \rangle}$. It can be checked computationally that $$\sqrt{\langle A^3 - B^3, T^3 - A^2B - AB^2 \rangle} = \langle A^3 - B^3, T^3 - A^2B - AB^2, A^2T + ABT + B^2T \rangle.$$ Since this ideal is generated by homogeneous elements of degree $3$, the fact that it contains $T + \ell_1A + \ell_2 B$ implies $T + \ell_1A + \ell_2B = 0$. As established earlier, this is not possible.

\underline{Case 2:} $(c = 1,$ $\ell_1 = \ell_2 = 0)$. Since $t^n = a^n$ for some $n \geq 1$, by lifting to $\mathbb{Z}_2[A,B,T]$ we find that $T^n - A^n = u(A,B,T)(A^3 - B^3) + v(A,B,T)(T^3 - A^2B - AB^2)$ for some $u, v \in \mathbb{Z}_2[A,B,T]$. By sending $B \rightarrow A$, we find that $T^n - A^n = v(A,A,T)T^3$, which is not possible.

\underline{Case 3:} $(c = 1,$ $\ell_1 = \ell_2 = 1)$. Since $(t + a + b)^n = a^n$ for some $n \geq 1$, by lifting to $\mathbb{Z}_2[A,B,T]$ we find that $(T + A + B)^n - A^n = u(A,B,T)(A^3 - B^3) + v(A,B,T)(T^3 - A^2B - AB^2)$ for some $u, v \in \mathbb{Z}_2[A,B,T]$. By sending $B \rightarrow A$, we find that $T^n - A^n = v(A,A,T)T^3$, which is not possible.

\underline{Case 4:} $(c = 1,$ $\ell_1 =0,$ $\ell_2 = 1)$. Since $(t + b)^n = a^n$ for some $n \geq 1$, by lifting to $\mathbb{Z}_2[A,B,T]$ we find that $(T + B)^n - A^n = u(A,B,T)(A^3 - B^3) + v(A,B,T)(T^3 - A^2B - AB^2)$ for some $u, v \in \mathbb{Z}_2[A,B,T]$. Let $\omega$ denote a cubic root of unity distinct from $1$. By sending $B \rightarrow \omega A$ and $T \rightarrow \omega A$, we find that $$A^n = u(A, \omega A, \omega A)\underbrace{(A^3 - \omega^3 A^3)}_{=A^3 - A^3 = 0} + v(A, \omega A, \omega A)(\omega^3A^3 - \omega A^3 - \omega^2 A^3).$$

Observe also that $(\omega^3A^3 - \omega A^3 - \omega^2 A^3) = (1 + \omega + \omega^2)A^3 = 0A^3 = 0$, since $\omega \neq 1$ satisfies $ (\omega + 1)(1 + \omega +\omega^2) =  \omega^3 - 1 = 0$. Hence, by the displayed equation, we find $A^n = 0$, a contradiction.

\underline{Case 5:} $(c = 1,$ $\ell_1 =1,$ $\ell_2 = 0)$. Since $(t + a)^n = a^n$ for some $n \geq 1$, by lifting to $\mathbb{Z}_2[A,B,T]$ we find that $(T + A)^n - A^n = u(A,B,T)(A^3 - B^3) + v(A,B,T)(T^3 - A^2B - AB^2)$ for some $u, v \in \mathbb{Z}_2[A,B,T]$. Again, let $\omega$ denote a cubic root of unity distinct from $1$. By sending $B \rightarrow \omega A$ and $T \rightarrow A$, we find that $$A^n = u(A, \omega A,  A)\underbrace{(A^3 - \omega^3 A^3)}_{=A^3 - A^3 = 0} + v(A, \omega A, A)(A^3 - \omega A^3 - \omega^2 A^3).$$

Observe also that $(A^3 - \omega A^3 - \omega^2 A^3) = (1 + \omega + \omega^2)A^3 = 0A^3 = 0$, since $\omega \neq 1$ satisfies $ (\omega + 1)(1 + \omega +\omega^2) =  \omega^3 - 1 = 0$. Hence, by the displayed equation, we find $A^n = 0$, a contradiction.
\end{proof}

\begin{Example}\label{exc}
Let $R = \dfrac{\mathbb{Z}_2[A,B,C,T]}{\langle A^3 - C^6, B^3 - C^6, T^2 - A - B \rangle}$, where $A$ and $B$ are indeterminates given degree $2$, and $C$ and $T$ are indeterminates given degree $1$. Let $a, b, c,$ and $t$ refer respectively to the residue classes of $A, B, C,$ and $T$ in $R$. Let $I = \langle c \rangle \subseteq R$. We will show $I^{\boxed{\#}} \neq I^\natural$ by proving $t \in I^\natural \setminus I^{\boxed{\#}}$.
\end{Example}
\begin{proof}
First note that $a^3 = c^6 \in (I^2)^3$, so $a \in (I^2)^{\#}$. Also, $b^3 = c^6 \in (I^2)^3$, so $b \in (I^2)^{\#}$. Now $t^2 = a + b \in (I^2)^{\#} \subseteq (I^2)^{\boxed{\#}}$, so $t \in I^\natural$. 

Next, we prove $t \not\in I^{\#}$. Suppose for the sake of contradiction that $t \in I^{\#}$. Since $t \neq 0$ and $I$ is a proper ideal, by Lemma \ref{lem-ex} we may write $t  = x_1 + \dots + x_k$, where $x_i^n \in I^n$ for some $n \geq 1$ and $x_i$ is homogeneous of degree $1$  for all $i$, $1 \leq i \leq k$.

Note that $t \not\in \langle c \rangle$. Indeed, supposing otherwise, we find that $$T -  u(A,B,C,T)C \in \langle A^3 - C^6, B^3 - C^6, T^2 - A - B \rangle$$ for some $u \in \mathbb{Z}_2[A,B,C,T]$. Since the non-zero elements of the displayed ideal have no homogeneous terms of degree smaller than $2$, taking the homogeneous component of degree $1$ of the displayed containment provides $T - u_0C = 0$, where $c_0 \in \mathbb{Z}_2$ is the constant term of $u$. This is a contradiction; no such relation between $T$ and $C$ holds in $\mathbb{Z}_2[A,B,C,T]$.

It follows that at least one element $x_i$, $1 \leq i \leq k$, is of the form $x_i =  t + \ell c$ for some $\ell \in \mathbb{Z}_2$. Indeed, otherwise the equation $t = x_1 + \dots + x_k$ shows $t \in \langle c \rangle$. However, we claim no element of the form $t + \ell c$  satisfies $(t + \ell c)^n \in I^n = \langle c^n \rangle$ for any $n \geq 1$, $\ell  \in \mathbb{Z}_2$, a contradiction. By taking the homogeneous component of degree $n$ of the last containment, we find that $(t + \ell c)^n = rc^n$ for some $r \in \mathbb{Z}_2$. We check several cases:

\underline{Case 1:} ($r = 0$). Since $r = 0$, we find $t + \ell c$ belongs to the nilradical of $R$. Lifting to $\mathbb{Z}_2[A,B,C,T]$, we find $T + \ell  C \in \sqrt{\langle A^3 - C^6, B^3 - C^6, T^2 - A - B \rangle}$. It can be checked computationally that $$\sqrt{\langle A^3 - C^6, B^3 - C^6, T^2 - A - B \rangle} = \langle T^2 - A - B, T^4 + TC^3, B^2T + BT^3 + T^2C^3, B^3 + C^6 \rangle.$$ Since this ideal is generated by homogeneous elements of degree larger than 1, the fact that it contains $T + \ell C$ implies $T + \ell C = 0$. As established earlier, this is not possible.

\underline{Case 2:} ($r = 1$, $\ell = 0$). Since $t^n = c^n$ for some $n \geq 1$, by lifting to $\mathbb{Z}_2[A,B,C,T]$, we find that $$T^n - C^n = \alpha_1(A,B,C,T)(A^3 - C^6) + \alpha_2(A,B,C,T)(B^3 - C^6) + \alpha_3(A,B,C,T)(T^2 - A - B)$$ for some $\alpha_1, \alpha_2, \alpha_3 \in \mathbb{Z}_2[A,B,C,T]$. Sending $A \rightarrow C^2$ and $B \rightarrow C^2$ gives $$T^n - C^n = \alpha_1\underbrace{(C^6 - C^6)}_{=0} + \alpha_2\underbrace{(C^6 - C^6)}_{ = 0} + \alpha_3(T^2 - C^2 - C^2) = \alpha_3T^2.$$ Hence $T^n - C^n \in \langle T^2 \rangle$, which is not possible.

\underline{Case 3:} ($r = 1$, $\ell = 1$). Since $(t + c)^n = c^n$ for some $n \geq 1$, by lifting to $\mathbb{Z}_2[A,B,C,T]$, we find that $$(T + C)^n - C^n = \alpha_1(A,B,C,T)(A^3 - C^6) + \alpha_2(A,B,C,T)(B^3 - C^6) + \alpha_3(A,B,C,T)(T^2 - A - B)$$ for some $\alpha_1, \alpha_2, \alpha_3 \in \mathbb{Z}_2[A,B,C,T]$. By replacing $\mathbb{Z}_2$ with its algebraic closure, we may assume the underlying field contains two distinct cubic roots of unity $\omega_1 \neq 1$ and $\omega_2 \neq 1$. Sending $A \rightarrow \omega_1C^2$, $B \rightarrow \omega_2C^2$, and $T \rightarrow C$ gives $$C^n = \alpha_1\underbrace{(\omega_1^3C^6 - C^6)}_{=C^6 - C^6=0} + \alpha_2\underbrace{(\omega_2^3C^6 - C^6)}_{=C^6 - C^6 = 0} + \alpha_3(C^2 - \omega_1C^2 - \omega_2C^2).$$

Note that $C^2 - \omega_1C^2 - \omega_2C^2 = C^2(1 + \omega_1 + \omega_2) = 0$. The right side of the displayed equation becomes $0$, a contradiction.

We have shown $t \not \in I^{\#}$. To show $t \not\in I^{\boxed{\#}}$, it suffices to show $(I^{\#})^{\#} = I^{\#}$. To this end, we first compute $I^{\#}$. We claim $\langle a, b, c \rangle = I^{\#}$. For the forward containment, we will prove $a, b, c \in I^{\#}$. First, note $c \in I \subseteq I^{\#}$. Next, $a^3 = c^6 \in I^6 \subseteq I^3$, so $a \in I^{\#}$. Similarly, $b^3 = c^6 \in I^6 \subseteq I^3$, so $b \in I^{\#}$. For the reverse containment, we work by contradiction. Assume there exists $r \in R$ such that $r \in I^{\#} \setminus \langle a, b, c \rangle$. By removing ``terms" of $r$ contained $\langle a, b, c \rangle$, we may reduce to the case where $r$ is of the form $c_0 + c_1t + \dots + c_mt^m$ for some $c_i \in \mathbb{Z}_2$. In fact, for $i \geq 2$, $t^i = t^{2}t^{i-2} = (a +b)t^{i-2} \in \langle a, b, c \rangle$. Hence we may further reduce to the case where $r = c_0 + c_1t$. It can not be the case that $c_0 = 1$. Indeed, otherwise $r = c_0 + c_1t \in I^{\#} \subseteq \sqrt{I}$ implies $c_0 = 1 \in \sqrt{I}$, since $\sqrt{I}$ is a homogeneous ideal. But $I$ is proper, so $\sqrt{I}$ is too, contradiction. Hence $r = c_1t$. If $c_1 = 0$, $0 = r \in \langle a, b, c \rangle$, a contradiction. Hence $r = t$. But we just proved $r  = t \not \in I^{\#}$, a contradiction. 

To prove $(I^{\#})^{\#} = I^{\#}$, we must prove $\langle a , b, c \rangle^{\#} = \langle a , b, c \rangle$. We work by contradiction. let $x \in R$ be such that $x \in \langle a, b ,c \rangle^{\#} \setminus \langle a, b, c \rangle$ . Working as above, by removing ``terms" of $x$ contained $\langle a, b, c \rangle$, we may reduce to the case where $x$ is of the form $c_0 + c_1t$ for some $c_i \in \mathbb{Z}_2$. It can not be the case that $c_0 = 1$. Indeed, otherwise $x = c_0 + c_1t \in \langle a, b, c \rangle ^{\#} \subseteq \sqrt{\langle a, b, c \rangle}$  implies $c_0 = 1 \in \sqrt{\langle a, b, c \rangle}$, a contradiction. Hence $x = c_1t$. If $c_1 = 0$, $0 = x \in \langle a, b, c \rangle$, a contradiction. Hence $x = t \in \langle a, b, c \rangle ^{\#}$. Since $t \neq 0$ and $\langle a, b, c \rangle$ is a proper ideal, we may apply Lemma \ref{lem-ex} to write $t = y_1 + \dots + y_s$, where $m \geq 1$, $y_i^m \in \langle a, b, c \rangle^m$ for all $i$, $1 \leq i \leq s$, and $y_i$ is homogeneous of degree $1$. At least one element $y_i$, $1 \leq i \leq s$, is of the form $y_i =  t + \ell 'c$ for some $\ell ' \in \mathbb{Z}_2$. Indeed, otherwise the equation $t = y_1 + \dots + y_s$ shows $t \in \langle c \rangle$, a contradiction. 

Fix $i$ such that $y_i =  t + \ell 'c$ for some $\ell ' \in \mathbb{Z}_2$. Since $\text{deg}_R(a) = \text{deg}_R(b) = 2$ and $\text{deg}_R(c) = 1$, the degree $m$ homogeneous part of $\langle a, b, c \rangle^m$ can be seen to be $\{ 0, c^m \}$. Now taking the degree $m$ homogeneous component of the equality $y_i^m = (t + \ell ' c)^m \in \langle a, b, c \rangle ^m$, we see that $(t + \ell ' c)^m \in \langle c^m \rangle$. Earlier, in a different context, we proved this was impossible. 
\end{proof}

\begin{Example}\label{exd}
Let $R = \dfrac{\mathbb{Z}_2[A,T]}{\langle A^2 - AT \rangle }$, where $A$ and $T$ are indeterminates given degree $1$. Let $a$ and $t$ refer respectively to the residue classes of $A$ and $T$ in $R$. Let $I = \langle t \rangle$. We will show $I^\natural \neq \overline{I}$ by proving $a \in \overline{I} \setminus I^\natural$.
\end{Example}
\begin{proof}
First, observe that $a$ satisfies the equation of integral dependence $X^2 + tX = 0$ over $I$, so $a \in \overline{I}$. To prove $a \not\in I^\natural$, we first prove $a \not\in I$. Next, we prove $I^m = (I^m)^{\#}$ for all $m \geq 1$. From this, it follows $I^m = (I^m)^{\boxed{\#}}$ for all  $m \geq 1$. Then $a \in I^\natural$ implies $a^m \in (I^m)^{\boxed{\#}} = I^m$ for some $m \geq 1$, so $a \in I^{\#} = I$, a contradiction; hence $a \not\in I^\natural$.

For the sake of contradiction, suppose $a \in I = \langle t \rangle$. Taking the homogeneous component of degree $1$ of this containment, we find $a = ct$ for some $c \in \mathbb{Z}_2$. Lifting this equation to $\mathbb{Z}_2[A,T]$, we find $A - cT \in \langle A^2 - AT \rangle $. The latter ideal contains no nonzero homogeneous elements of degree $1$, so $A - cT = 0$, which is impossible. Hence $a \not \in \langle I \rangle$.

 Fix $m \geq 1$. It is clear that $I^m \subseteq (I^m)^{\#}$. For the reverse containment, let $x \in (I^m)^{\#}$. If $x = 0$, then $x \in I^m$ and we're done. So, assume $x \neq 0$. Set $\ell = \text{deg}_R(x^*)$. We proceed  by cases:

\underline{Case 1:} $(\ell > m ).$ Since $a^2 = at$, we have $\langle a, t \rangle^m \subseteq \langle at^{m-1}, t^m \rangle \subseteq \langle t^{m - 1} \rangle$.  In this case, each nonzero homogeneous component $y$ of $x$ has $\text{deg}_R(y) > m$. But $y \subseteq \langle a, t \rangle^{\text{deg}_R y} \subseteq \langle t^m \rangle$, which then implies $x \in \langle t^m \rangle$.

\underline{Case 2:} $(\ell < m ).$ Since $x \in (I^m)^{\#}$, we may write $x = x_1 + \dots + x_r$, where $x_i^{n_i} \in \langle t^m \rangle^{n_i}$ for some $n_i \geq 1$ for all $i$. Let $n$ denote the product of the elements $n_1, \dots, n_r$ and observe that $x_i^{n} \in \langle t^m \rangle^{n}$ for all $i$.  We may assume without loss of generality that $x_i \neq 0$ for all $i$. First note that $\text{deg}_R(x_i^*) \geq m$ for all $i$.  Suppose otherwise that $\text{deg}_R(x_i^*) < m$ for some $i$. Since $\langle t^{mn} \rangle$ is a homogeneous ideal, $x_i^n \in \langle t^m \rangle^n = \langle t^{mn} \rangle$ implies $(x_i^*)^n \in \langle t^{mn} \rangle$. The ideal $\langle t^{mn} \rangle$ contains no nonzero homogeneous elements of degree less than $mn$, and $\text{deg}_R\big((x_i^*)^n\big) \leq n\cdot \text{deg}_R(x_i^*)< mn$. Thus, taking the homogeneous component of degree $\text{deg}_R\big((x_i^*)^n\big)$ of the inclusion $(x_i^*)^n \in \langle t^{mn} \rangle$, we find $(x_i^*)^n = 0$. It can be checked computationally that $\sqrt{ \langle A^2 - AT \rangle} = \langle A^2 - AT \rangle$. Therefore $R$ is reduced, so $x_i^* = 0$, a contradiction. Hence $\text{deg}_R(x_i^*) \geq m$ for all $i$. Now, taking the degree $\ell$ homogeneous component of the equality $x = x_1 + \dots + x_r$, we find $x^* = 0$, a contradiction.

\underline{Case 3:} $(\ell = m ).$ Since $x \in (I^m)^{\#}$, we may write $x = x_1 + \dots + x_r$, where $x_i^n \in \langle t^m \rangle^n$ for some $n \geq 1$, for all $i$. We may assume without loss of generality that $x_i \neq 0$ for all $i$. Exactly as in the $\ell < m$ case, we find $\text{deg}_R(x_i^*) \geq m$ for all $i$. For at least one $i$, the element $x_i$ has $\text{deg}_R(x_i^*) = m$. Otherwise, if $\text{deg}_R(x_i^*) > m$ for all $i$, then taking the homogeneous component of degree $m$ of the equation $x = x_1 + \dots + x_r$ provides $x = 0$, a contradiction. Without loss of generality, we may reorder the elements $x_i$ so that $\text{deg}_R(x_i^*) = m$ for $1 \leq i \leq s$ (for some some $s \leq r$) and $\text{deg}_R(x_i^*)  > m$ otherwise. Taking the homogeneous component of degree $m$ of the equation $x = x_1 + \dots + x_r$ gives $x = x_1^* + \dots + x_s^*$. After a renaming, we may write $x = x_1 + \dots + x_s$ for some $x_i \in R$ such that $x_i^n \in \langle t^m \rangle ^n$ and $x_i$ is homogeneous of degree $m$.

To show $x \in I^m$ as desired, it suffices to show $x_i \in I^m = \langle t^m \rangle$ for all $i$. Without loss of generality, we show $x_1 \in \langle t^m \rangle$. Since $x_1$ is homogeneous of degree $m$, then $$x_1 \in \langle a, t \rangle^m \subseteq \langle t^{m-1} \rangle.$$ Taking the homogeneous component of degree $m$ of this inclusion gives $x_1 = \alpha t^{m-1}$ for some $\alpha \in R$ homogeneous of degree $1$. Now $\alpha^nt^{nm - n} = x_1^n \in \langle t^m\rangle^n = \langle t^{mn} \rangle.$ Note that $t$ is a non-zero-divisor in $R$. To see this, it can be checked computationally that the minimal prime ideals of $R$ are $\langle t - a\rangle$ and $\langle a \rangle$. Lifting to $\mathbb{Z}_2[A,T]$, we find $$T \not\in \langle T - A \rangle + \langle A^2 - AT \rangle = \langle T - A \rangle \text{ and } T \not \in \langle A \rangle + \langle A^2 - AT \rangle  = \langle A \rangle.$$ Hence $t \not \in \langle t - a \rangle$ and $t \not \in  \langle a \rangle$, so $t \not \in \langle t -a \rangle \cup \langle a \rangle $, as desired. We may cancel $t^{nm - n}$ in the inclusion $\alpha^n t^{nm - n} \in \langle t^{mn}\rangle$ to find $\alpha^n \in \langle t^n \rangle$. We claim $\alpha \in \langle t \rangle$.  Write $\alpha = c_0a + ct$ for some $c_0, c \in \mathbb{Z}_2$. If $c_0 = 0$, then we're done. So, assume $c_0 = 1$ and note that we have $\alpha = a + ct $ for some $c \in \mathbb{Z}_2$.  Taking the homogeneous component of degree $n$ of the inclusion $\alpha^n \in \langle t^n \rangle$, we find $(a + ct)^n = dt^n$ for some $d \in \mathbb{Z}_2$. We prove this is impossible by checking cases:

\underline{Case 1:} $(d = 0)$. Since $R$ is reduced, we find $a + ct = 0$. Lifting to $\mathbb{Z}_2[A,T]$, we recover $A + cT \in \langle A^2 - AT \rangle$. This ideal contains no nonzero homogeneous elements of degree $1$, so $A + cT = 0$. This is not possible.

\underline{Case 2:} ($c = 0$, $d = 1$).  Here, $a^n = t^n$. Lifting, we find $A^n - T^n \in \langle A^2 - AT \rangle$. Sending $A \rightarrow 0$, we find $-T^n = 0$, a contradiction.

\underline{Case 3:} ($c = 1$, $d = 1$). Here, $(a + t)^n = t^n$. We find $(A + T)^n - T^n \in \langle A^2 - AT \rangle$. Sending $A \rightarrow T$, we find $-T^n \in  \langle T^2 - T^2 \rangle = \langle 0 \rangle$, a contradiction.
\end{proof}

\begin{bibdiv}
\begin{biblist}

\bib{Sh}{article}{
   author={Sheldon, Philip B.},
   title={How changing $D[[x]]$ changes its quotient field},
   journal={Trans. Amer. Math. Soc.},
   volume={159},
   date={1971},
   pages={223--244},
   issn={0002-9947},
   review={\MR{279092}},
   doi={10.2307/1996008},
}
\bib{An}{article}{
   author={Anderson, David F.},
   title={Root closure in integral domains},
   journal={J. Algebra},
   volume={79},
   date={1982},
   number={1},
   pages={51--59},
   issn={0021-8693},
   review={\MR{679969}},
   doi={10.1016/0021-8693(82)90315-5},
}
\bib{AnDoRo}{article}{
   author={Anderson, David F.},
   author={Dobbs, David E.},
   author={Roitman, Moshe},
   title={Root closure in commutative rings},
   journal={Ann. Sci. Univ. Clermont-Ferrand II Math.},
   number={26},
   date={1990},
   pages={1--11},
   issn={0249-7042},
   review={\MR{1112633}},
}
\bib{Vi}{book}{
   author={Villarreal, Rafael H.},
   title={Monomial algebras},
   series={Monographs and Research Notes in Mathematics},
   edition={2},
   publisher={CRC Press, Boca Raton, FL},
   date={2015},
   pages={xviii+686},
   isbn={978-1-4822-3469-5},
   review={\MR{3362802}},
}
\bib{Ro}{article}{
   author={Roitman, Moshe},
   title={On root closure in Noetherian domains},
   conference={
      title={Factorization in integral domains},
      address={Iowa City, IA},
      date={1996},
   },
   book={
      series={Lecture Notes in Pure and Appl. Math.},
      volume={189},
      publisher={Dekker, New York},
   },
   date={1997},
   pages={417--428},
   review={\MR{1460791}},
}

\end{biblist}
\end{bibdiv}
\end{document}